\numberwithin{equation}{section}
\newtheorem{theorem}{Theorem}[section]
\newtheorem{lemma}[theorem]{Lemma}
\title{On a seventh order convergent weakly $L$-stable Newton Cotes formula with application on Burger's equation}
\author{Amit Kumar Verma$^a$\thanks{$^a$akverma@iitp.ac.in, $^c$ravi.agarwal@tamuk.edu }, Mukesh Kumar Rawani$^b$, Ravi P. Agarwal$^c$\\{\small{\it{$^{a,b}$Department of Mathematics, Indian Institute of Technology Patna,}}}\\{\small{\it{Patna, Bihar 801106, India.}}}\\\small{\it{$^c$Department of Mathematics,Texas A\&M, University-Kingsville,}}\\\small{\it{ 700 University Blvd., MSC 172, Kingsville, Texas  78363-8202.}}}
\begin{document}
	
\maketitle
	
\begin{abstract}
In this paper we derive $7^{th}$ order convergent integration formula in time which is weakly $L$-stable. To derive the method we use, Newton Cotes formula, fifth-order Hermite interpolation polynomial approximation (osculatory interpolation) and sixth-order explicit backward Taylor's polynomial approximation. The vector form of this formula is used to solve Burger's equation which is one dimensional form of Navier-Stokes equation. We observe that the method gives high accuracy results in the case of inconsistencies as well as for small values of viscosity, e.g., $10^{-3}$. Computations are performed by using Mathematica 11.3. Stability and convergence of the schemes are also proved. To check the efficiency of the method we considered 6 test examples and several tables and figures are generated which verify all results of the paper.
\end{abstract}
{\small{\textit{Keywords:}  Hermite interpolation, L-stable, A-stable, Weakly L-stable, Burgers' equation}}	
\section {Introduction}
Burgers' equation with $\nu_d$ as coefficient of viscosity can be defined as
\begin{equation}
\label{lb1}
\frac{\partial w}{\partial t}+w\frac{\partial w}{\partial x}-\frac{\nu_{d}}{2}\frac{\partial^{2}w}{\partial x^{2}}=0,\hspace{.5cm}(x,t)\in\Sigma_{T},
\end{equation}
where
\begin{equation*}
\Sigma_{T}=\left(\alpha_{0},\alpha_{1}\right) \times\left(0,T\right],~ T> 0,
\end{equation*}
with Dirichlet boundary conditions (BCs),
\begin{equation}
\label{bc}
w(\alpha_{i},t)=0,~~i=0,1 ~~\mbox{and} ~~t\in(0,T],
\end{equation}
and  initial conditions (ICs),
\begin{equation}
w(x,0)=f(x), \hspace{.5cm} x\in\left(\alpha_{0},\alpha_{1}\right).
\end{equation}

Linearized form of Burgers' equation (by using Hopf-Cole transformation) is given as
 \begin{equation}  \frac{\partial\psi}{\partial{t}}=\frac{\nu_{d}}{2}\frac{\partial^{2}\psi}{\partial{x}^{2}}
 \end{equation}
 with the Neumann boundary conditions $(BCs),$ 
 \begin{equation*}
 	\psi_{x}(\alpha_{i},t)=0,~~ i=0,1,
 \end{equation*}
 and the initial conditions $(ICs),$
 \begin{equation*}
 	\psi(x,0)=g(x).
 \end{equation*}

The study of Burger's equation is popular among the scientific community as it is very simple form of Naviers' Stokes equation and due to it's appearence in various field  of applied mathematics and physics such as in the context of gas dynamics, in the theory of shock waves, traffic flow, mathematical modeling of turbulent fluid and in continuous stochastic processes.
In $1915$, it  was first introduced by Bateman \cite{bateman1915some}. Later in $1948$, it was introduced by Burger \cite{burgers1939mathematical,burgers1948mathematical} as a class of equation which delineate the mathematical model of turbulence. Recently in $2019$, Ryu etc. \cite{ryu2019improved} propose some nowcasting rainfall models based on Burger's equation.  This equation has been solved analytically for some initial condition and solution is represented in the form of Fourier series expansion which converges slowly for small values of viscosity. Exact solution does not work very well for small values of viscosity and hence it always attracts researchers to test newly devloped numerical method on this nonlinear parabolic PDEs.
	\par Recently, with development in computer speed several numerical schemes based on finite difference method, finite element method, spectral method, differential quadrature method, decomposition method, moving least squares particle method, Haar wavelet quasilinearization approach etc., have been developed to solve the Burger's equation
\cite{caldwell1982solution,saka2007quartic,aksan2006quadratic,ozics2003finite,mittal1993numerical,hassanien2005fourth,xu2011novel,kutluay1999numerical,asaithambi2010numerical,dogan2004galerkin,ali1992collocation,khater2008chebyshev,korkmaz2011polynomial,korkmaz2011quartic,mittal2012differential,mittal2009differential,fu2019moving,jiwari2013numerical,jiwari2015hybrid,bakodah2017decomposition,gowrisankar2019efficient,seydaouglu2019meshless,shiralashetti2019numerical,seydaouglu2018accurate,elgindy2018high,lukyanenko2018solving, KPLVAKVAJOM, AMC2009}. 

Crank-Nicolson (CN) method \cite{crank1947practical,smith1978numerical,thomas2013numerical,lawson1978extrapolation} is a second order method which is based on Trapezoidal formula which is A-stable but not L-stable. In the presence of inconsistencies \cite{morton1967difference} CN produces unwanted oscillations. Chawla etc. \cite{chawla1999generalized} produces generalised Trapezoidal formula (GTF($\alpha$)), where $\alpha>0$ which is L-stable and gives a quite stable result. Chawla etc. \cite{chawla1994stabilized} proposed a modified Simpson's $1/3$ rule (ASIMP) which is A-stable and used to give fourth-order time integration formula but it produces unwanted oscillations like CN due to lack of L-stability. To remove this oscillation, Chawla etc. \cite{chawla2005new} produced  L-stable version of Simpson's $1/3$ rule and implemented it to derive a third-order time integration formula for  the diffusion equation which gives stable and accurate result. Lajja \cite{verma2012stable} proposed L-stable derivative free error corrected Trapezoidal rule (LSDFECT). Verma etc. \cite{verma2015higher} developed a fifth order time integration formula for the diffusion equation which is weakly L-stable. 

Here we derive $7^{th}$ order time integration formula which is weakly $L$-stable and generalize above mentioned existing results.  The issue of slow convergence of series solution for small $\nu_d$ forces analytical solution of Burgers' equation to deviate from the exact solution. So, it is not easy to compute the solution for small values of $\nu_d$. The newly developed method computes the solution even for small values of $\nu_d$. To compute the numerical solution we use Mathematica 11.3 and find out that numerical solutions are in good agreement for small values of $\nu_{d}$. The result are in good agreement with exact solution when inconsistencies are present in the initial and boundary condition. 

The paper is organized as follows. In section 2, we give close form solution which we use to compute exact solution. In section 3, we derive higher order integration method in time for $u'(t)=f(t,u)$. In section 4, we use this technique  combined with finite difference to solve Burgers' equation and demonstrate the stability. In section 5, we illustrate the numerical results with tables and 2D-3D graphs.  

\section{Close Form Solution}
	Hopf \cite{hopf1950partial} and Cole \cite{cole1951quasi}  gave idea that the equation \eqref{lb1} can be reduced to the following linear heat equation	
	\begin{equation} 
	\label{lb2}
	 \frac{\partial\psi}{\partial{t}}=\frac{\nu_{d}}{2}\frac{\partial^{2}\psi}{\partial{x}^{2}},
	\end{equation}
	with the Neumann boundary condition (BC)
	\begin{equation}
	\psi_{x}(\alpha_{i},t)=0,~~ \alpha_{i}=i,~~ i=0,1,
	\end{equation}
	and the initial condition (IC)
	\begin{equation}
	\psi(x,0)=g(x),
\end{equation}
by non-linear transformation 
	\begin{equation}
	\label{r1}
\phi=-\nu_{d}(log\psi), \hspace{.5cm} \phi=\phi(x,t),
\end{equation}
	and
\begin{equation}
\label{r2}
w=\phi_{x}.
\end{equation}
The analytical solution of the linearized heat equation \eqref{lb2} is given by 
\begin{equation}
\label{lb8}
\psi(x,t)=\beta_{0}+\sum_{l=1}^{\infty}\beta_{l}\exp\left(-\frac{\nu_{d} l^2\pi^2 t}{2}\right)\cos (l\pi x),
\end{equation}
where $\beta_{0}$ and $\beta_{l}$ are Fourier coefficient and is given by

\begin{eqnarray*}
	\beta_{0}&=&\int_{0}^1 \exp\left(-\frac{1}{\nu_{d}}\int_{0}^xw_{0}(\xi)d\xi\right) dx,\\
	\beta_{l}&=&2\int_{0}^1 \exp\left(-\frac{1}{\nu_{d}}\int_{0}^xw_{0}(\xi)d\xi\right)\cos(l\pi x) dx,
\end{eqnarray*}
where $w_{0}(\xi)=w(\xi,0) $. 

The analytical solution by Hopf-Cole transformation is

\begin{equation}
\label{ANBE}w(x,t)=\pi \nu_{d}\frac{\sum_{l=1}^{\infty}\beta_{l}\exp(-\frac{\nu_{d} l^2\pi^2 t}{2})l\sin (l\pi x)}{\beta_{0}+\sum_{l=1}^{\infty}\beta_{l}\exp(-\frac{\nu_{d} l^2\pi^2 t}{2})\cos (l\pi x)}.
\end{equation}

\section{Illustration of the proposed method}
We consider the initial value problem
\begin{equation}
u'(t)=f(t,u),\hspace{.5cm} u(t_{0})=\eta_{0}.
\end{equation}
The Newton Cotes time integration formula is given by
\begin{equation}
\label{lb3}
u_{n+1}=u_{n}+\frac{h}{840}\Big( 41f_{n}+216f_{n+1/6}+27f_{n+2/6}+272f_{n+3/6}+27f_{n+4/6}+216f_{n+5/6}+41f_{n+1}\Big).
\end{equation}
Now, we use the fifth order Hermite approximation for $u_{n+1/6}$, $u_{n+2/6}$, $u_{n+3/6}$, $u_{n+4/6}$, $u_{n+5/6}$ which are  given by 
\begin{eqnarray}
&& u_{n+1/6}=\frac{1}{15552}\Big(1500y_{n}+552u_{n+1}+2250 hu'_{n}-210 hu'_{n+1}+125h^2u''_{n}+25h^2u''_{n+1}\Big),\\
&&u_{n+2/6}=\frac{1}{243}\Big(192u_{n}+51u_{n+1}+48 hu'_{n}-18 hu'_{n+1}+4h^2u''_{n}+2h^2u''_{n+1}\Big),\\
&&u_{n+3/6}=\frac{1}{64}\Big(32u_{n}+32u_{n+1}+10 hu'_{n}-10 hu'_{n+1}+h^2u''_{n}+h^2u''_{n+1}\Big),\\
&&u_{n+4/6}=\frac{1}{243}\Big(51u_{n}+192u_{n+1}+18 hu'_{n}-48 hu'_{n+1}+2h^2u''_{n}+4h^2u''_{n+1}\Big),\\
&& u_{n+5/6}=\frac{1}{15552}\Big(552u_{n}+15000u_{n+1}+210 hu'_{n}-2250 hu'_{n+1}+25h^2u''_{n}+125h^2u''_{n+1}\Big),
\end{eqnarray}
and  sixth order Taylor's approximation
\begin{eqnarray}
&&\label{l6}
u_{n}=u_{n+1}-hu'_{n+1}+\frac{h^2}{2}u''_{n+1}-\frac{h^3}{6}u'''_{n+1}+\frac{h^4}{24}u^{iv}_{n+1}-\frac{h^5}{120}u^v_{n+1},
\end{eqnarray}
to get
\begin{eqnarray}
\nonumber\overline {u_{n+1/6}}&=&\frac{1}{46656}\Big[  44875u_{n}+1781u_{n+1}+6750hu'_{n}+375h^2u''_{n}-755hu'_{n+1}+\frac{275}{2}h^2u''_{n+1}\\
&&+125(-\frac{h^3}{6}u'''_{n+1}-\frac{h^4}{6}u^{iv}_{n+1}-\frac{h^5}{6}u^{v}_{n+1})\Big],\\
\nonumber\overline {u_{n+2/6}}&=&\frac{1}{729}\Big[  568u_{n}+161u_{n+1}+144hu'_{n}+12h^2u''_{n}-62hu'_{n+1}+10h^2u''_{n+1}+\\
&&8(-\frac{h^3}{6}u'''_{n+1}-\frac{h^4}{6}u^{iv}_{n+1}-\frac{h^5}{6}u^{v}_{n+1})\Big],\\
\nonumber \overline {u_{n+3/6}}&=&\frac{1}{64}\Big[  31u_{n}+33u_{n+1}+10hu'_{n}+h^2u''_{n}-11hu'_{n+1}+\frac{3}{2}h^2u''_{n+1}+(-\frac{h^3}{6}u'''_{n+1}-\\
&&\frac{h^4}{6}u^{iv}_{n+1}-\frac{h^5}{6}u^{v}_{n+1})\Big],
\end{eqnarray}
\begin{eqnarray}
\nonumber\overline {u_{n+4/6}}&=&\frac{1}{729}\Big[  145u_{n}+584u_{n+1}+54hu'_{n}+6h^2u''_{n}-152hu'_{n+1}+20h^2u''_{n+1}+\\&&8(-\frac{h^3}{6}u'''_{n+1}-
\frac{h^4}{6}u^{iv}_{n+1}-\frac{h^5}{6}u^{v}_{n+1})\Big],\\
\nonumber\overline{u_{n+5/6}}&=&\frac{1}{46656}\Big[  1531u_{n}+45125u_{n+1}+630hu'_{n}+75h^2u''_{n}-6875hu'_{n+1}+ \frac{875}{2}h^2u''_{n+1}\\
&&+125(-\frac{h^3}{6}u'''_{n+1}-\frac{h^4}{6}u^{iv}_{n+1}-\frac{h^5}{6}u^{v}_{n+1})\Big].
\end{eqnarray}
Now, we define
\begin{eqnarray}
&&\overline{f_{n+1/6}}=f(x_{n+1/6},\overline{u_{n+1/6}}),\\
&&\overline{f_{n+2/6}}=f(x_{n+2/6},\overline{u_{n+2/6}}),\\
&&\overline{f_{n+3/6}}=f(x_{n+3/6},\overline{u_{n+3/6}}),\\
&&\overline{f_{n+4/6}}=f(x_{n+4/6},\overline{u_{n+4/6}}),\\
&&\overline{f_{n+5/6}}=f(x_{n+5/6},\overline{u_{n+5/6}}).
\end{eqnarray}
Now, the time integral formula \eqref{lb3} for the interval $[t_{n},t_{n+1}]$ becomes
\begin{eqnarray}
\label{lb4}
 u_{n+1}=u_{n}+\frac{h}{840}\Big( 41f_{n}+216\overline{f_{n+1/6}}+27\overline{f_{n+2/6}}+272\overline{f_{n+3/6}}+27\overline{f_{n+4/6}}
+216\overline{f_{n+5/6}}+41f_{n+1}\Big).
\end{eqnarray}
\subsection{Local trunction error}

Using Taylor's series expansion, we have
\begin{eqnarray}
\nonumber	u_{n+1/6}&=&\frac{1}{15552}\Big[1500u_{n}+552u_{n+1}+2250 hu'_{n}-210 hu'_{n+1}+125h^2u''_{n}\\
	&&+25h^2u''_{n+1}\Big]-\frac{25h^6}{6718464}u^{vi}_{n}-\frac{475h^7}{282175488}u^{vii}_{n}+\mathcal{O}(h^8),\\
\nonumber	u_{n+2/6}&=&\frac{1}{243}\Big[192u_{n}+51u_{n+1}+48 hu'_{n}-18 hu'_{n+1}+4h^2u''_{n}+2h^2u''_{n+1}\Big]\\
	&&-\frac{h^6}{65610}u^{vi}_{n}-\frac{h^7}{137781}u^{vii}_{n}+\mathcal{O}(h^8),\\
\nonumber	u_{n+3/6}&=&\frac{1}{64}\Big[32u_{n}+32u_{n+1}+10 hu'_{n}-10 hu'_{n+1}+h^2u''_{n}+h^2u''_{n+1}\Big]\\
	&&-\frac{h^6}{46080}u^{vi}_{n}-
	\frac{h^7}{92160}u^{vii}_{n}+\mathcal{O}(h^8),\\
\nonumber	u_{n+4/6}&=&\frac{1}{243}\Big[51u_{n}+192y_{n+1}+18 hu'_{n}-48 hu'_{n+1}+2h^2u''_{n}+4h^2u''_{n+1})\Big]\\
	&&-\frac{h^6}{65610}u^{vi}_{n}-\frac{11h^7}{1377810}u^{vii}_{n}+\mathcal{O}(h^8),\\
\nonumber	u_{n+5/6}&=&\frac{1}{15552}\Big[552u_{n}+15000u_{n+1}+210 hu'_{n}-2250 hu'_{n+1}+25h^2u''_{n}\\ &&+125h^2u''_{n+1}\Big]-\frac{25h^6}{6718464}u^{vi}_{n}-\frac{575h^7}{282175488}u^{vii}_{n}+\mathcal{O}(h^8),\\
\nonumber	u_{n}&=&u_{n+1}-hu'_{n+1}+\frac{h^2}{2}u''_{n+1}-\frac{h^3}{6}u'''_{n+1}+\frac{h^4}{24}u^{iv}_{n+1}-\frac{h^5}{120}u^v_{n+1}+\frac{h^6}{720}u^{vi}_{n}\\
	&&+\frac{h^7}{840}u^{vii}_{n}+\mathcal{O}(h^8),
\end{eqnarray} 
then it follows that
\begin{eqnarray}
&&u_{n+1/6}=\overline{u_{n+1/6}}+\frac{425h^7}{282175488}u^{vii}_{n}+\mathcal{O}(h^8),\\
&&u_{n+2/6}=\overline{u_{n+2/6}}+\frac{4h^7}{688905}u^{vii}_{n}+\mathcal{O}(h^8),
\end{eqnarray}
\begin{eqnarray}
&&u_{n+3/6}=\overline{u_{n+3/6}}+\frac{h^7}{129024}u^{vii}_{n}+\mathcal{O}(h^8),\\
&&u_{n+4/6}=\overline{u_{n+4/6}}+\frac{h^7}{196830}u^{vii}_{n}+O(h^8),\\
&&u_{n+5/6}=\overline{u_{n+5/6}}+\frac{325h^7}{282175488}u^{vii}_{n}+\mathcal{O}(h^8).
\end{eqnarray}
Also, we have
\begin{eqnarray}
\nonumber u_{n+1}&=&u_{n}+\frac{h}{840}[ 41f_{n}+216f_{n+1/6}+27f_{n+2/6}+272f_{n+3/6}+27f_{n+4/6}\\
&&+216f_{n+5/6}+41f_{n+1}]-\frac{h^9u^{(9)}}{1567641600}.\end{eqnarray}
From all of the above, we deduce that 
\begin{eqnarray}
\nonumber u_{n+1}&=&u_{n}+\frac{h}{840}( 41f_{n}+216\overline{f_{n+1/6}}+27\overline{f_{n+2/6}}+272\overline{f_{n+3/6}}+27\overline{f_{n+4/6}}\\
&&+216\overline{f_{n+5/6}}+41f_{n+1})+t_{n}(h),
\end{eqnarray}
where
\begin{equation*}
t_{n}(h)=\mathcal{O}(h^8).
\end{equation*}
Thus the scheme \eqref{lb4} is seventh order convergent.

\subsection{Stability of the formula \eqref{lb4}}
Consider the test problem
\begin{equation}
u'(t)=-\lambda u(t), \hspace{.5cm}\lambda>0
\end{equation}
and assume $s= h\lambda$, then we have
\begin{equation}
u_{n+1}=\varPsi\left( s\right) u_{n},
\end{equation}
where
\begin{equation*}
\varPsi\left( s\right)=\frac{540\left( 840-414s+82s^2-7s^3\right) }{453600+230040s+48600s^2+5480s^3+540s^4+135s^5+27s^6}.
\end{equation*}
\begin{figure}
	\begin{center}
\includegraphics[height=2in,width=3in]{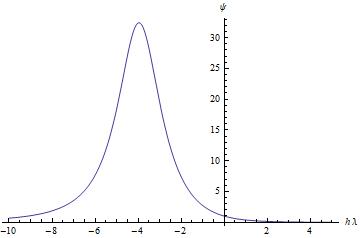}
\caption{Root of characteristic equation }
\label{fg1}
\end{center}
\end{figure}
From  figure \ref{fg1} it can be seen that $\varPsi(s)\nless 1$ and hence our scheme is not $A$-stable. Since $\varPsi(s)\rightarrow 0$ as $s\rightarrow \infty$ and hence scheme is weakly $L$-stable.
 \subsection{Stability Region for the formula \eqref{lb4}}
 We use the boundary locus method \cite[p.64, chapter 7]{LeVeque2007} and determine the boundary of the region. It can be easily seen that outside of the region it is unconditionally stable.
 \begin{figure}[H]
 	\begin{center}
 		\includegraphics[width=3in]{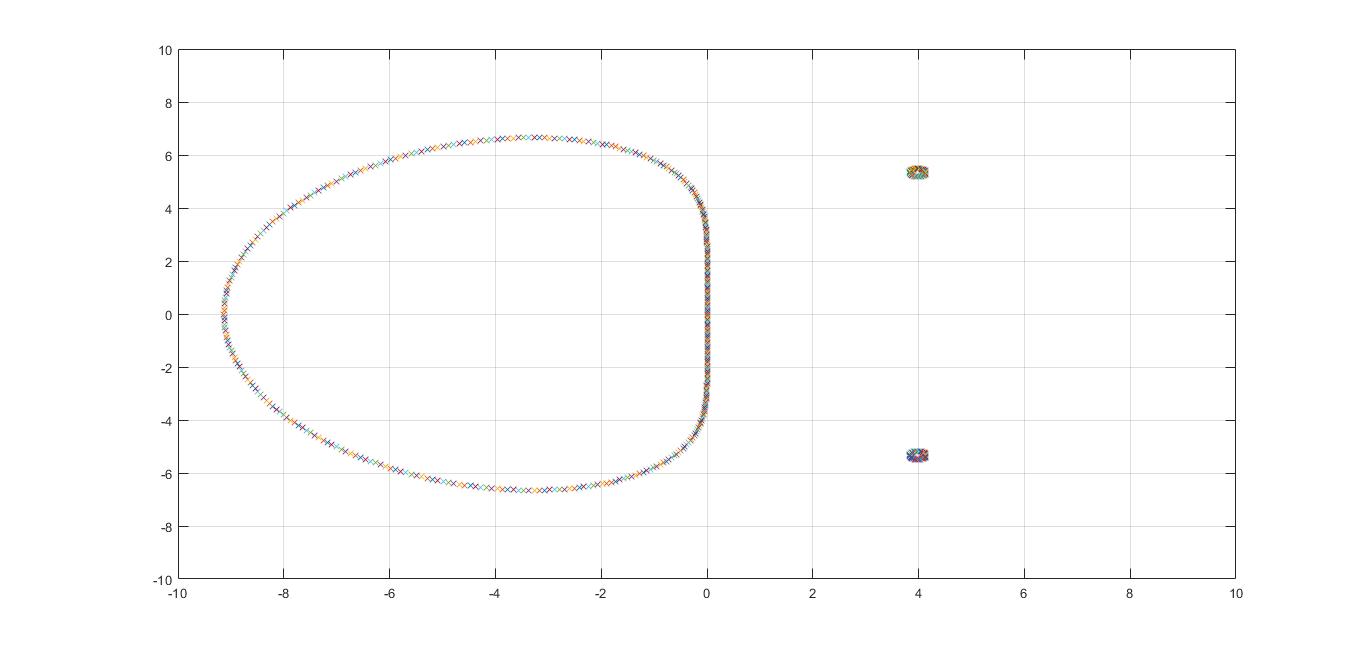}\\
 		\caption{Region of Stability }\label{stabilityfig}
 	\end{center}
 \end{figure}
 \section{Solution of the Burgers' equation}
\subsection{The final scheme}
We discretize the solution space with uniform mesh expressed as ${\Sigma_{T}}_{i,j}=\{(x_{i},t_{j}):i=0,1,2,...,N,j=0,1,2,...,M\}.$  For that, we partition the interval $\left[\alpha_{0} ,\alpha_{1} \right] $ in to $N$ equal sub intervals with the spatial grid $x_{i}=i h,i=0,1,2,...,N$, where $N$ is a positive integer and $h$ is the spatial step.

Also partition the interval $\left[ 0,T \right] $ in to $M$ equal subintervals with the temporal grid $t_{j}=j\tau,j=0,1,2,...,M$ where $\tau=T/M$ and $M$ is a positive integer.

Now define $\psi_{i}(t)=\psi(x_{i},t)$ and consider linearized Burger's equation \eqref{lb2} and compute the solution $\psi(x_{i},t)$ for a given $t$ and for $x_{i}$ on $\left[\alpha_{0} ,\alpha_{1} \right] $. Then we use \eqref{r1}-\eqref{r2} to deduce the following formula for computing the $w(x_i,t_j)$ which is the solution of the nonlinear Burgers' equation \eqref{lb1},
\begin{eqnarray*}
w(x_{i},t)=	\Big(\frac{-\nu_{d}}{2h}\Big)\frac{\psi(x_{i}+h,t)-\psi(x_{i}-h,t)}{\psi(x_{i},t)}.
\end{eqnarray*}
\par Here we approximate second order spatial derivative by fourth order finite difference ratio which is given by
\begin{eqnarray*}
	\frac{\partial^2\psi(x,t)}{\partial^2x}\approx\frac{16(\psi(x+h,t)+\psi(x-h,t))-30\psi(x,t)-(\psi(x+2h,t)+\psi(x-2h,t))}{12h^2},
\end{eqnarray*}
and convert the linearized Burgers' equation into an initial value problem in vector form. 
\par Now, we apply finite difference discretization on \eqref{lb2} with the Neumann boundary conditions
\begin{equation*}
\psi_{x}(\alpha_{i},t)=0,~~i=0,1,
\end{equation*}
we get the following equation

\begin{equation}
\label{lb5}
\frac{\partial\Psi(t)}{\partial t}=-\frac{\nu_{d}}{24h^2} D \Psi(t),
\end{equation}
where $\Psi(t)=\left[\psi_{0}(t),\psi_{1}(t),\psi_{2}(t),...,\psi_{N}(t)\right]^T$ and $D$ is the $(N+1)\times(N+1)$ pentadiagonal matrix given by
\begin{equation}
D=
\begin{pmatrix}
30&-32&2&0&0& \cdots &0&0\\
-16&31&-16&1&0& \cdots &0&0\\
1&-16&30&-16&1&0& \cdots &0\\
\vdots & \vdots & \ddots & \ddots & \ddots & \ddots &  \vdots\\
0&0&0&1&-16&30&-16&1\\
0&0&0&0&1&-16&31&-16\\
0&0&0&0&0&2&-32&30
\end{pmatrix}
\end{equation}
and $\Psi(0)=\left[g(x_{0}),g(x_{1}),g(x_{2}),...,g(x_{N})\right].$\\
Let $\rho=\nu_{d}\tau/24 h^2$, then applying the time integration formula on the initial value problem \eqref{lb5}, we get
\begin{eqnarray}
	\label{lb6}
\nonumber	\Psi_{j+1}&=&\Psi_{j}-\frac{\rho}{840}D\Big(41\Psi_{j}+216\overline {\Psi_{j+1/6}}+27\overline {\Psi_{j+2/6}}+272\overline {\Psi_{j+3/6}}\\
	&& +27\overline {\Psi_{j+4/6}}+216\overline {\Psi_{j+5/6}}+41 \Psi_{j+1}\Big),
\end{eqnarray}
where
\begin{eqnarray}
\nonumber	\overline {\Psi_{j+1/6}}&=&\frac{1}{46656}\Big[ \left( 44875I-6750\rho D+375\rho^2 D^2\right)\Psi_{j}+\Big( 1781I+755 \rho D\\
	&&+\frac{275}{2}\rho^2 D^2+125\big(\frac{\rho^3 D^3}{6}+\frac{\rho^4 D^4}{24}+\frac{\rho^5 D^5}{120}\big)\Big)\Psi_{j+1}\Big],\\
\nonumber	\overline {\Psi_{j+2/6}}&=&\frac{1}{729}\Big[\left( 568I-144\rho D+12\rho^2 D^2\right)\Psi_{j}+\Big( 161I+62 \rho D+10\rho^2 D^2\\
	&&+8\big( \frac{\rho^3 D^3}{6}+\frac{\rho^4 D^4}{24}+\frac{\rho^5 D^5}{120}\big)\Big)\Psi_{j+1}\Big],\\
\nonumber	\overline{\Psi_{j+3/6}}&=&\frac{1}{64}\Big[ \left( 31I-10\rho D+\rho^2 D^2\right)\Psi_{j}+\Big( 33I+11 \rho D+\frac{3}{2}\rho^2 D^2+\big(\frac{\rho^3 D^3}{6}\\
	&&+\frac{\rho^4 D^4}{24}+\frac{\rho^5 D^5}{120}\big)\Big)\Psi_{j+1}\Big],\\
\nonumber	\overline {\Psi_{j+4/6}}&=&\frac{1}{729}\Big[ \left( 145I-54\rho D+6\rho^2 D^2\right)\Psi_{j}+\Big( 584I+152 \rho D+20\rho^2 D^2\\
	&&+8 \big(\frac{\rho^3 D^3}{6}+\frac{\rho^4 D^4}{24}+\frac{\rho^5 D^5}{120}\big)\Big)\Psi_{j+1}\Big],\\
\nonumber	\overline {\Psi_{j+5/6}}&=&\frac{1}{46656}\Big[ \left( 1531I-630\rho D+75\rho^2 D^2\right)\Psi_{j}+\Big( 45125I+ 6875 \rho D \\
	&&+\frac{875}{2}\rho^2 D^2 +125\big(\frac{\rho^3 D^3}{6}+\frac{\rho^4 D^4}{24}+\frac{\rho^5 D^5}{120}\big)\Big)\Psi_{j+1}\Big].
\end{eqnarray}
Now, use $\overline {\Psi_{j+1/6}},~~\overline {\Psi_{j+2/6}},~~\overline {\Psi_{j+3/6}},~~\overline {\Psi_{j+4/6}},~~\overline {\Psi_{j+5/6}}$  in Equation \eqref{lb6}, we get
\begin{eqnarray}
\nonumber&&(453600I+2300\rho D+48600 \rho^2D^2+5480 \rho^3D^3+540 \rho^4D^4+135 \rho^5D^5+27\rho^6D^6)~\overline{\Psi_{j+1}}\\
\label{l10}&&~~~~~~~=540(840I-414\rho D+84\rho^2D^2-7\rho^3 D^3)~\overline{\Psi_{j}}.
\end{eqnarray}
This method is $\mathcal{O}(h^4)+\mathcal{O}(\tau^7).$ BY using \eqref{lb6} we can compute $\Psi_{j+1}$ and hence $w_{ij}$ is computed at different $x_i$'s for a given time level $t_j$. Physical properties of the solutions are discussed later in form of figures and tables. 

\subsection{Unconditional stability of the scheme for the heat equation}
 Equation \eqref{lb6} can be written as
\begin{equation*}
\Psi_{j+1}=P \Psi_{j},
\end{equation*}
where
\begin{eqnarray}
\nonumber P&=&\frac{540(840I-414\rho D+84\rho^2D^2-7\rho^3 D^3)}{(453600I+230040\rho D+48600 \rho^2D^2+5480 \rho^3D^3+540 \rho^4D^4+135 \rho^5D^5+27\rho^6D^6)}\\
&=&L_{1}^{-1}L_{2},~\text{(say)},
\end{eqnarray}
where
\begin{eqnarray}
\nonumber&&L_{1}=(453600I+230040\rho D+48600 \rho^2D^2+5480 \rho^3D^3+540 \rho^4D^4+135 \rho^5D^5+27\rho^6D^6)\\
\nonumber&&L_{2}=540(840I-414\rho D+84\rho^2D^2-7\rho^3 D^3).
\end{eqnarray}
\begin{lemma}
	The matrix $P$ is similar to a symmetric matrix.
\end{lemma}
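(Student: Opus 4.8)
The plan is to produce a single fixed invertible matrix that conjugates $D$ into a symmetric matrix, and then to transport this symmetrization through the rational expression $P=L_1^{-1}L_2$, exploiting that both $L_1$ and $L_2$ are \emph{polynomials} in $\rho D$.

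First I would observe that $D$ deviates from symmetry only in its first and last rows: the interior stencil $(1,-16,30,-16,1)$ is already symmetric, whereas $D_{0,1}=-32\neq-16=D_{1,0}$ and $D_{0,2}=2\neq1=D_{2,0}$, with the mirror situation at the corner entries $D_{N,N-1},D_{N,N-2}$. A diagonal conjugation $\widetilde D:=SDS^{-1}$ with $S=\operatorname{diag}(s_0,1,\dots,1,s_N)$ changes $D_{0,1}\mapsto s_0 D_{0,1}$ and $D_{1,0}\mapsto s_0^{-1}D_{1,0}$, so symmetry in that pair forces $s_0^2=D_{1,0}/D_{0,1}=\tfrac12$; crucially, the other offending pair gives the \emph{same} condition $s_0^2=D_{2,0}/D_{0,2}=\tfrac12$, and likewise $s_N^2=\tfrac12$. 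Hence I would take
\[
S=\operatorname{diag}\!\left(\tfrac{1}{\sqrt2},\,1,\,1,\dots,1,\,\tfrac{1}{\sqrt2}\right),
\]
and verify directly that $(SDS^{-1})_{0,1}=-16\sqrt2=(SDS^{-1})_{1,0}$, $(SDS^{-1})_{0,2}=\sqrt2=(SDS^{-1})_{2,0}$, analogously at the lower-right corner, and that every other entry of $D$ is left untouched; so $\widetilde D=\widetilde D^{T}$.

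Next, since $L_1$ and $L_2$ are polynomials in $\rho D$ with real coefficients, conjugation by $S$ gives $\widetilde L_k:=SL_kS^{-1}$ equal to those same polynomials evaluated at $\rho\widetilde D$ ($k=1,2$). Thus $\widetilde L_1$ and $\widetilde L_2$ are symmetric (a polynomial in a symmetric matrix is symmetric) and commute (both are polynomials in $\widetilde D$). Moreover $\widetilde L_1$ is invertible because $L_1$ is — it is the value at $\rho D$ of a polynomial with strictly positive coefficients, and $\rho D$ has nonnegative real spectrum since $\rho>0$ and $D$ is similar to the symmetric, positive-semidefinite fourth-order stencil matrix $\widetilde D$ — and then $\widetilde L_1^{-1}$ is again a polynomial in $\widetilde D$, hence symmetric and commuting with $\widetilde L_2$. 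Therefore
\[
SPS^{-1}=(SL_1S^{-1})^{-1}(SL_2S^{-1})=\widetilde L_1^{-1}\widetilde L_2,\qquad
\bigl(\widetilde L_1^{-1}\widetilde L_2\bigr)^{T}=\widetilde L_2^{T}\bigl(\widetilde L_1^{-1}\bigr)^{T}=\widetilde L_2\,\widetilde L_1^{-1}=\widetilde L_1^{-1}\widetilde L_2,
\]
so $P$ is similar to the symmetric matrix $\widetilde L_1^{-1}\widetilde L_2$.

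I expect the only step with genuine content to be the first one: spotting that one scalar, $1/\sqrt2$, simultaneously repairs \emph{both} asymmetric entries in each boundary row (i.e. that $D_{1,0}/D_{0,1}=D_{2,0}/D_{0,2}=1/2$), so that a single diagonal similarity symmetrizes all of $D$ — and noting that the interior is already symmetric so nothing else needs fixing. Everything afterward is the standard package: similarity commutes with forming polynomials and inverses, and two commuting symmetric matrices have a symmetric product. A minor side point to be handled cleanly is the invertibility of $L_1$, which is in any case implicit in the very definition of $P=L_1^{-1}L_2$.
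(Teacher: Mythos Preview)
Your proof is correct and follows essentially the same route as the paper: a diagonal conjugation (your $S$ is exactly the paper's $Q^{-1}$, so $SDS^{-1}=Q^{-1}DQ$) symmetrizes $D$, and then the fact that $L_1,L_2$ are polynomials in $D$ transports this to $P$. You supply more detail than the paper does---in particular the explicit check that the single value $s_0^2=\tfrac12$ simultaneously repairs both boundary asymmetries, and a justification of the invertibility of $L_1$---but the argument is the same.
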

\begin{proof}
	Let us introduce a diagonal matrix
	\begin{equation*}
	Q=
	\begin{pmatrix}
	\sqrt{2}&&&& &\\
	&1&&&&&\\
	&&\ddots&&&&\\
	&&&1&&&\\
	&&&&&\sqrt{2}
	\end{pmatrix}
	\end{equation*}
	such that 
	\begin{equation*}
	\tilde{D}=Q^{-1}DQ,
	\end{equation*}
	i.e., $D$ is similar to a symmetric matrix $\tilde{D}$.\\
	Now, we will show that $P$ is similar to symmetric matrix. Let
	\begin{equation*}
	\begin{split}
	\tilde{P}=Q^{-1}PQ=Q^{-1}L_{1}^{-1}L_{2}Q=[Q^{-1}L_{1}^{-1}Q][Q^{-1}L_{2}Q]\\
	=[Q^{-1}L_{1}Q]^{-1}[Q^{-1}L_{2}Q]=\tilde{L_{1}^{-1}}\tilde{L_{2}}.
	\end{split}
	\end{equation*}
	\begin{eqnarray*}
		\tilde{L}_{1}&=&(453600I+230040\rho \tilde{D}+48600 \rho^2\tilde{D}^2+5480 \rho^3\tilde{D}^3+540 \rho^4\tilde{D}^4+135 \rho^5\tilde{D}^5+27\rho^6\tilde{D}^6)\\
		\tilde{L}_{2}&=&540(840I-414\rho \tilde{D}+84\rho^2\tilde{D}^2-7\rho^3 \tilde{D}^3)
	\end{eqnarray*}
	but matrices $\tilde{L}_{1}^{-1}$  and $\tilde{L}_{2}$ are symmetric and commute and therefore
	$P$ is similar to a symmetric matrix $\tilde{P}$ and therefore all the eigen values of the matrix $P$ are real.
	\end{proof}
\begin{lemma}
	Eigen values of the matrix $D$ are $30+2 \cos(2 l\pi /N)-32 \cos(l \pi/N),~l=0(1)N.$
\end{lemma}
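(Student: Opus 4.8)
The plan is to compute the eigenvalues of $D$ directly by exhibiting explicit eigenvectors, exploiting the structure that $D$ is (up to the boundary rows) the square of the standard second-difference operator under Neumann conditions. First I would recall that the pentadiagonal matrix $D$ arose by applying the fourth-order finite-difference stencil for $\partial_{xx}$ together with the discrete Neumann conditions $\psi_x(\alpha_i,t)=0$; consequently $D$ should act diagonally on the same cosine grid functions that diagonalize the discrete Neumann Laplacian. So I would introduce the candidate eigenvectors $v^{(l)} = \big(\cos(l\pi x_0),\cos(l\pi x_1),\dots,\cos(l\pi x_N)\big)^T$ with $x_i = i/N$, i.e. $v^{(l)}_i = \cos(l\pi i/N)$ for $l = 0,1,\dots,N$, since these are exactly the functions satisfying the homogeneous Neumann reflection at both ends.

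Next I would verify the eigenvalue equation $D v^{(l)} = \mu_l v^{(l)}$ row by row. For an interior row the action of $D$ on $v^{(l)}$ is $-v^{(l)}_{i-2} + 16 v^{(l)}_{i-1} - 30 v^{(l)}_i + 16 v^{(l)}_{i+1} - v^{(l)}_{i+2}$ (with the overall sign/normalization as in the matrix); using the product-to-sum identity $\cos(A\pm B)$ one gets $16\cdot 2\cos(l\pi i/N)\cos(l\pi/N) - 2\cos(l\pi i/N)\cos(2l\pi/N) - 30\cos(l\pi i/N)$, which factors as $\big(32\cos(l\pi/N) - 2\cos(2l\pi/N) - 30\big)v^{(l)}_i$. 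Matching signs with the matrix $D$ as written, this yields the claimed eigenvalue $\mu_l = 30 + 2\cos(2l\pi/N) - 32\cos(l\pi/N)$. Then I would separately check the four boundary rows (the first two and the last two), where the stencil is modified precisely so that the ``ghost'' values $v^{(l)}_{-1}, v^{(l)}_{-2}$ and $v^{(l)}_{N+1}, v^{(l)}_{N+2}$ are folded back via the even reflection $\cos(-\theta)=\cos\theta$ and $\cos((N+k)\pi l/N)$ reflecting about $i=N$; the modified entries ($30,-32,2$ in the first row, $-16,31,-16,1$ in the second, and their mirror images at the bottom) are exactly what is needed for the same factorization to go through at the ends. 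Finally, since $l = 0,1,\dots,N$ gives $N+1$ vectors $v^{(l)}$ that are mutually orthogonal (they are the standard discrete cosine basis on $N+1$ points, a fact I can invoke or verify with a short orthogonality computation), they form a complete eigenbasis, so these are all the eigenvalues of the $(N+1)\times(N+1)$ matrix.

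The routine interior computation is trivial; the one place that needs genuine care — and what I expect to be the main obstacle — is the boundary-row bookkeeping. One must confirm that each of the anomalous entries in rows $1,2$ and $N,N+1$ (in $1$-indexed form) is precisely the value produced by reflecting the cosine grid function, so that $D v^{(l)} = \mu_l v^{(l)}$ holds in those rows with the \emph{same} $\mu_l$ as in the interior. This is just four scalar identities per eigenvector, each following from $\cos$ being even and from $\cos((2N-k)\pi l/N) = \cos(k\pi l/N)$, but it is where an off-by-one or a sign slip would hide; I would write these out explicitly. Once the boundary rows are checked, the result is immediate, and as a sanity check I would note $\mu_0 = 30 + 2 - 32 = 0$, consistent with the constant vector lying in the kernel of a Neumann-type operator.
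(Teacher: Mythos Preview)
Your proposal is correct and takes a genuinely different route from the paper's proof. The paper argues indirectly: it writes the eigenvector equation $DV=\lambda_l V$ as a fourth-order linear recurrence $v_{j-2}-16v_{j-1}+(30-\lambda_l)v_j-16v_{j+1}+v_{j+2}=0$ after imposing the reflection conditions $v_{-1}=v_1$, $v_0=v_2$, $v_{N}=v_{N+2}$, $v_{N-1}=v_{N+3}$ that encode the modified boundary rows; it then passes to the characteristic polynomial $m^4-16m^3+(30-\lambda_l)m^2-16m+1=0$, takes a root on the unit circle $m_1=e^{i\theta}$, and reads off $\lambda_l=30+2\cos 2\theta-32\cos\theta$, finally asserting (without much detail) that the boundary constraints force $\theta=l\pi/N$. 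Your approach instead guesses the discrete cosine eigenvectors $v^{(l)}_i=\cos(l\pi i/N)$ from the underlying Neumann structure, verifies the interior relation by the product-to-sum identity, checks the four boundary rows via even reflection of cosine, and invokes orthogonality of the DCT basis for completeness. Your argument is more direct and fully self-contained; the paper's characteristic-equation route is in principle more flexible (it does not require a good guess for the eigenvectors) but, as written there, the step pinning down $\theta=l\pi/N$ from the boundary conditions is left rather sketchy, so your version is actually the more complete of the two. Your sanity check $\mu_0=0$ and the explicit handling of the ghost-point reflections are exactly the right places to be careful.
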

\begin{proof}
	Let $V=\{v_{1},v_{2},v_{3},...,v_{N+1}\}$ be the eigen vectors of the matrix $D$ corresponding to the eigen value $\lambda_{l}.$ Then, we have
	\begin{equation*}
	(30-\lambda_{l})v_{1}-32v_{2}+2v_{3}=0,
	\end{equation*}
	\begin{equation*}
	-16v_{1}+(31-\lambda_{l})v_{2}-16v_{3}+v_{4}=0,
		\end{equation*}
		\begin{equation*}
	v_{j-2}-16v_{j-1}+(30-\lambda_{l})v_{j}-16v_{j+1}+v_{j+2}=0,~~j=2,3,4,...,N-1,
		\end{equation*}
			\begin{equation*}
		v_{N-2}-16v_{N-1}+(31-\lambda_{l})v_{N}-16v_{N+1}=0,
		\end{equation*}
		\begin{equation*}
	2v_{N-1}-32v_{N}+(30-\lambda_{l})v_{N+1}=0.
		\end{equation*}
	We set $v_{1}=v_{3},~v_{0}=v_{2},~v_{N}=v_{N+2},~v_{N-1}=v_{N+3}$ then we get fourth order difference equation 
		\begin{equation}
		\label{c1}
	v_{j-2}-16v_{j-1}+(30-\lambda_{l})v_{j}-16v_{j+1}+v_{j+2}=0,~~j=1,2,...,N+1,
	\end{equation}
	with the boundary conditions  $v_{1}=v_{3},~v_{0}=v_{2},~v_{N}=v_{N+2},~v_{N-1}=v_{N+3}$.
The characteristc equation of the equation \eqref{c1} is $m^4-16m^3+(30-\lambda_{l})m^2-16m+1=0$. Assume $m_{1},m_{2},m_{3},m_{4}$ are the characteristc roots then we have
\begin{equation*}
m_{1}+m_{2}+m_{3}+m_{4}=16,
\end{equation*}
\begin{equation*}
m_{1}m_{2}+m_{1}m_{3}+m_{1}m_{4}+m_{2}m_{3}+m_{2}m_{4}+m_{3}m_{4}=(30-\lambda_{l}),
\end{equation*}
\begin{equation*}
m_{1}m_{2}m_{3}+m_{1}m_{3}m_{4}+m_{2}m_{3}m_{4}+{m_{1}m_{2}m_{4}}=16,
\end{equation*}
\begin{equation*}
m_{1}m_{2}m_{3}m_{4}=1,
\end{equation*}
and the solution is given by $v_{j}=C_{1}m_{1}^j+C_{2}m_{2}^j+C_{3}m_{3}^j+C_{4}m_{4}^j$. Let $m_{1}=re^{i \theta}$ then setting $r=1$, gives $\lambda_{l}=30+2 \cos2\theta-32 \cos\theta$. Also, it can be shown that for all possible values of $m_{2},m_{3},m_{4}$ $\theta=l \pi/N,~l=0,1,2,...,N.$ Since $V$ is the non trivial vector satisfying $DV=\lambda_{l}V$, therefore the eigenvalues of $D$ are $\lambda_{l}=30+2 \cos(2 l\pi /N)-32 \cos(l \pi/N),~l=0(1)N.$ Also, it can be shown that $\lambda_{l}\geq 0,~~\forall~ l.$
\end{proof}
Now let $\varrho (P)$ be the spectral radius of the matrix $P$ then 
\begin{equation*}
\varrho (P)=\varrho (\bar P),
\end{equation*}
and is given by 
\begin{equation*}
\varrho (P)\leq max_{l}| \mu_{l}|,
\end{equation*}
where $\mu_{l}(l=0,1,2,...N)$ are the eigen values  of the matrix $L_{1}^{-1}L_{2}$ and therefore 
\begin{eqnarray}
\nonumber \mu_{l}&=&\frac{540(840I-414\rho \lambda_{l}+84\rho^2\lambda_{l}^2-7\rho^3 \lambda_{l}^3)}{(453600I+230040\rho \lambda_{l}+48600 \rho^2\lambda_{l}^2+5480 \rho^3\lambda_{l}^3+540 \rho^4\lambda_{l}^4+135 \rho^5\lambda_{l}^5+27\rho^6\lambda_{l}^6)},\\
\nonumber l=0,1,2,...,N.
\end{eqnarray}
It is clear that the eigen value $\mu_{l}\leq 1$ for all possible values of $ \rho > 0$ and hence the method is unconditionally stable. Using Taylor's series, it can also be shown that the method is consistent with the differential equation.

\section{Numerical Experiment}
To validate the accuracy and efficiency of the scheme developed in this paper, here we consider six example with Dirichlet BCs and different ICs. We compute error between the proposed scheme and the analytical solution which is measured by the mean root square error norm $L_{2}$ and maximum error norm $L_{\infty}$ defined by
\begin{eqnarray}
	\label{e1}
	L_{2}=\sqrt{h\sum_{j=0}^{N}\mid w^{exact}_{j}-(w_{Nu})_{j} \mid^{2}},~~~L_{\infty}=\Arrowvert w^{exact}-w_{Nu} \Arrowvert _{\infty}= \mathop{max}_{j}\mid w^{exact}_{j}-(w_{Nu})_{j} \mid,
\end{eqnarray}
where $ w^{exact}_{j}$ is the exact solution and $(w_{Nu})_{j}$ is numerical solution at the $jth$ spatial point.
  We also consider problem \ref{l15} and \ref{l16} in which ICs are inconsistent with the BCs. We depict the analytical solution and numerical solution with the help of table and figures. We use Mathematica 11.3 to compute the solution and the results are plotted with the help of the software Origin $8.5$. 
\subsection{Example 1}
\label{p1}
Consider the equation \eqref{lb1} with Dirichlet BCs
\begin{equation}
w(\alpha_{i},t)=0,~~ \alpha_{i}=i,~~i=0,1 ~~\mbox{and} ~~t\in(0,T],
\end{equation}
and ICs
\begin{equation}
w(x,0)=\sin(\pi x), \hspace{.5cm} x\in(0,1),
\end{equation}
where $\nu_{d}$ is the coefficient of viscosity. 
Using the transformation
\begin{equation}
w(x,t)=\frac{-\nu_{d}\psi_{x}}{\psi},
\end{equation}
the equation \eqref{lb1} will be
\begin{equation}
\psi_{t}=\frac{\nu_{d}}{2}\psi_{xx},\hspace{.5cm} x\in(0,1),t>0,
\end{equation}
with ICs and BCs
\begin{eqnarray}
	&&\psi(x,0)=exp\Big(\frac{1}{\pi \nu_{d}}(\cos\pi x-1)\Big),\\
	&&\psi_{x}(\alpha_{i},t)=0,~~ \alpha_{i}=i,~~ i=0,1 ~~\mbox{and}~~ t>0.
	\end{eqnarray}
	Now, equation \eqref{ANBE} represents the analytical solution  where
	\begin{eqnarray}
		&&\beta_{0}=\int_{0}^1 \exp\Big(\frac{1}{\pi \nu_{d}}(\cos\pi x-1)\Big)dx,\\
		&&\beta_{l}=2\int_{0}^1 \exp \Big(\frac{1}{\pi \nu_{d}}(\cos\pi x-1)\cos l\pi x\Big)dx,
	\end{eqnarray}
	are Fourier coefficients.\par
	In this example, in order to measure the accuracy of the numerical solution, the discrete $L_{2}$ and $L_{\infty}$-error defined in the equation \eqref{e1} are used to compute the difference between the analytical solution and numerical solution at different specified time. In table \ref{tb1}, we take $\nu_{d}=2$ and time step $\tau=0.0001$ with $h=0.0125$. It is observed that the computed results are in good agreement with the analytical solution. It can be seen that as time passes the numerical solution decreases at the same location. While with the location changing from $0$ to $1$, the solution first increases and then decreases at the same moment.  Table \ref{tb2} lists the numerical solution and analytical solution obtained by the present method for $\nu_{d}=0.2,~h=0.0125$ and with the time step $\tau=0.0001$. The obtained result are compared with the existing result in the literature and can be seen  that the present method gives better result than the results  obtained by the method in \cite{kutluay2004numerical} and \cite{asaithambi2010numerical}.  From table \ref{tb3}, it can be seen that the present scheme gives satisfactory result for small values of viscosity coefficient $\nu_{d}=0.01$  at different time for $\tau=0.01$ and $h=0.0125$.\par 
	Figure \ref{fg3} depicts the accuracy of the numerical solution at different time for $\nu_{d}=0.2$ and we are not able to distinguish between the exact solution and numerical solution. It is known that the Fourier series solution fails to converge for $\nu_{d}<0.01$. This is because  the rate of convergence of infinite series \eqref{ANBE} is slow for the small value of $\nu_{d}$. From figure \ref{fg4}, it can be seen that the analytical solution shows high oscillation while the numerical solution obtained by present method follows the physical behaviour.  Figure \ref{fg5} represents the physical behaviour of the numerical solution for the different small values of $\nu_{d}$. In figure \ref{fg6} we illustrate the physical behaviour of the computed solutions with the help of  three-dimensional figures for small value of $\nu_{d}$.
\begin{table}[!ht]
	\caption{Solution of problem \ref{p1} for $h=0.0125$ at different value $T$ for $\nu_{d}=2$ and $\tau=0.0001$}
	\label{tb1}										
	\centering											
	\begin{center}											
		\resizebox{17cm}{3cm}{
			\begin{tabular}{lllllll}
				\hline
				$x$\hspace{1cm} & \multicolumn{2}{l}{T=0.001}        & \multicolumn{2}{l}{T=0.01}         & \multicolumn{2}{l}{T=0.1}          \\ 
				& \multicolumn{2}{l}{\noindent\rule{6.2cm}{0.4pt}}        & \multicolumn{2}{l}{\noindent\rule{6.2cm}{0.4pt}}         & \multicolumn{2}{l}{\noindent\rule{6.2cm}{0.4pt}}\\
				& Computed Solution & Exact Solution & Computed Solution & Exact Solution & Computed Solution & Exact Solution \\ \hline
				0.1 & 0.304976          & 0.305088       & 0.273145          & 0.273239       & 0.109509          & 0.109538       \\
				0.2 & 0.580361          & 0.580565       & 0.521393          & 0.521564       & 0.209737          & 0.209792       \\
				0.3 & 0.799363          & 0.799621       & 0.721630           & 0.721852       & 0.291820           & 0.291896       \\
				0.4 & 0.940545          & 0.940817       & 0.854348          & 0.854590        & 0.347834          & 0.347924       \\
				0.5 & 0.989926          & 0.990174       & 0.905483          & 0.905713       & 0.371482          & 0.371577       \\
				0.6 & 0.942407          & 0.942609       & 0.868137          & 0.868334       & 0.358954          & 0.359046       \\
				0.7 & 0.802375          & 0.802522       & 0.743949          & 0.744098       & 0.309827          & 0.309905       \\
				0.8 & 0.583373          & 0.583466       & 0.543723          & 0.543821       & 0.227760           & 0.227817       \\
				0.9 & 0.306837          & 0.306881       & 0.286951          & 0.286999       & 0.120656          & 0.120687   \\ \hline
				$L_{\infty}~~error$ &2.71275E-4 & &2.413E-04 & & 9.54852E-05\\
				$L_{2}~~error$ &6.41526E-05&   &5.82562E-05&   &2.27535E-05 \\ \hline
		\end{tabular}}
	\end{center}
\end{table}
\begin{table}[!ht]
	\caption{ Comparison of existing, present numerical result and exact solution for $h=0.0125,~\nu_{d}=0.2,~\tau=0.0001$ at different  value of $T$ for problem \ref{p1}}
	\label{tb2} 
	\centering
	\begin{tabular}{llllll}
		\hline
		$x$ \hspace{0.5in} & $T$ \hspace{0.5in} & FEM \cite{kutluay2004numerical}   \hspace{0.5in} &  Asai\cite{asaithambi2010numerical}    \hspace{0.5in} & Present  \hspace{0.5in} & Exact   \\ \hline
		0.25 & 0.4 & 0.31215 & 0.30891 & 0.3087531 & 0.30889 \\
		& 0.6 & 0.24360  & 0.24076 & 0.2406489 & 0.24074 \\
		& 0.8 & 0.19815 & 0.19570  & 0.1956120  & 0.19568 \\
		& 1   & 0.16473 & 0.16259 & 0.1625168 & 0.16256 \\
		& 3   & 0.02771 & 0.02722 & 0.0271953 & 0.02720  \\
		0.50 & 0.4 & 0.57293 & 0.56970  & 0.5694998 & 0.56963 \\
		& 0.6 & 0.45088 & 0.44728 & 0.4470928 & 0.44721 \\
		& 0.8 & 0.36286 & 0.35932 & 0.3591441 & 0.35924 \\
		& 1   & 0.29532 & 0.29200   & 0.2918410  & 0.29192 \\
		& 3   & 0.04097 & 0.04023 & 0.0401946 & 0.04021 \\
		0.75 & 0.4 & 0.63038 & 0.62567 & 0.6254715 & 0.62544 \\
		& 0.6 & 0.49268 & 0.48747 & 0.4871652 & 0.48721 \\
		& 0.8 & 0.37912 & 0.37415 & 0.3738557 & 0.37392 \\
		& 1   & 0.29204 & 0.28766 & 0.2874128 & 0.28747 \\
		& 3   & 0.03038 & 0.02979 & 0.0297645 & 0.02977\\ \hline
	\end{tabular}
\end{table}

\begin{table}[!ht]
	\begin{center}
		\caption{Solution of problem \ref{p1}  at different value of $T$ for $\nu_{d}=0.01,~h=0.0125$ and $\tau=0.01$}
		\label{tb3}	
		\begin{center}
			\resizebox{7cm}{3.5cm}{
				\begin{tabular}{llll}
					\hline
					$x$  & $T$ & Numerical Solution & Exact Solution \\ \hline
					0.25 & 5   & 0.046922           & 0.046963       \\
					& 10  & 0.024202           & 0.024217       \\
					& 15  & 0.016300             & 0.016308       \\
					& 20  & 0.012236           & 0.012240        \\
					0.50 & 5   & 0.093998           & 0.093920        \\
					& 10  & 0.048414           & 0.048421       \\
					& 15  & 0.032431           & 0.032439       \\
					& 20  & 0.023883           & 0.023889       \\
					0.75 & 5   & 0.141354           & 0.140832       \\
					& 10  & 0.071175           & 0.071134       \\
					& 15  & 0.044135           & 0.044133       \\
					& 20  & 0.029155           & 0.029159     \\ \hline 
			\end{tabular}}
		\end{center}
	\end{center}
\end{table}

\begin{figure}[h!]
	\begin{center}
		\includegraphics[width=3in]{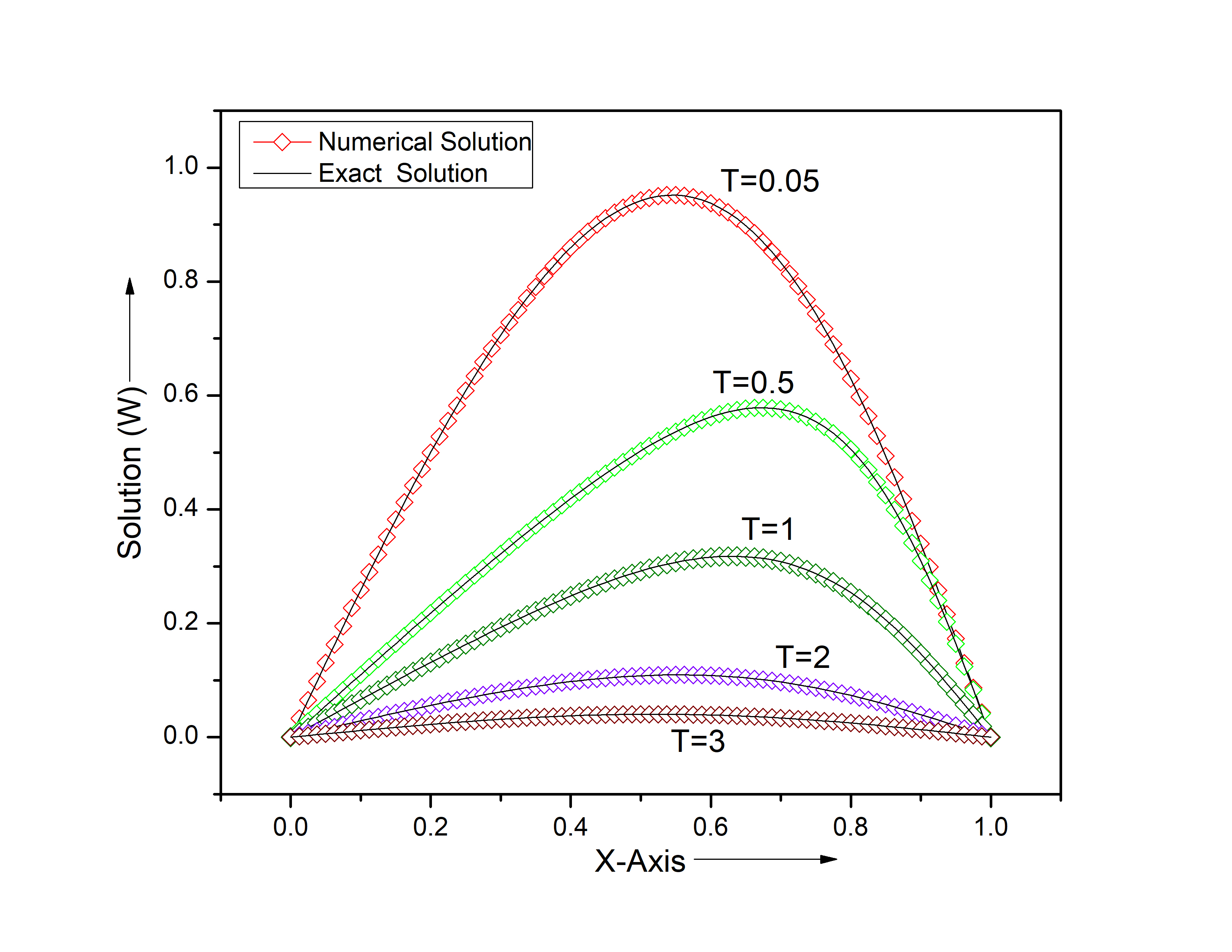}
		\caption{Comparison of Numerical and analytical solution for problem \ref{p1} at different time $T$, $\nu_{d}=0.2,~\tau=0.001$ and $h=0.0125$.}
		\label{fg3}
	\end{center}
\end{figure}

\begin{figure}[h!]
	\begin{center}
		\includegraphics[width=3in]{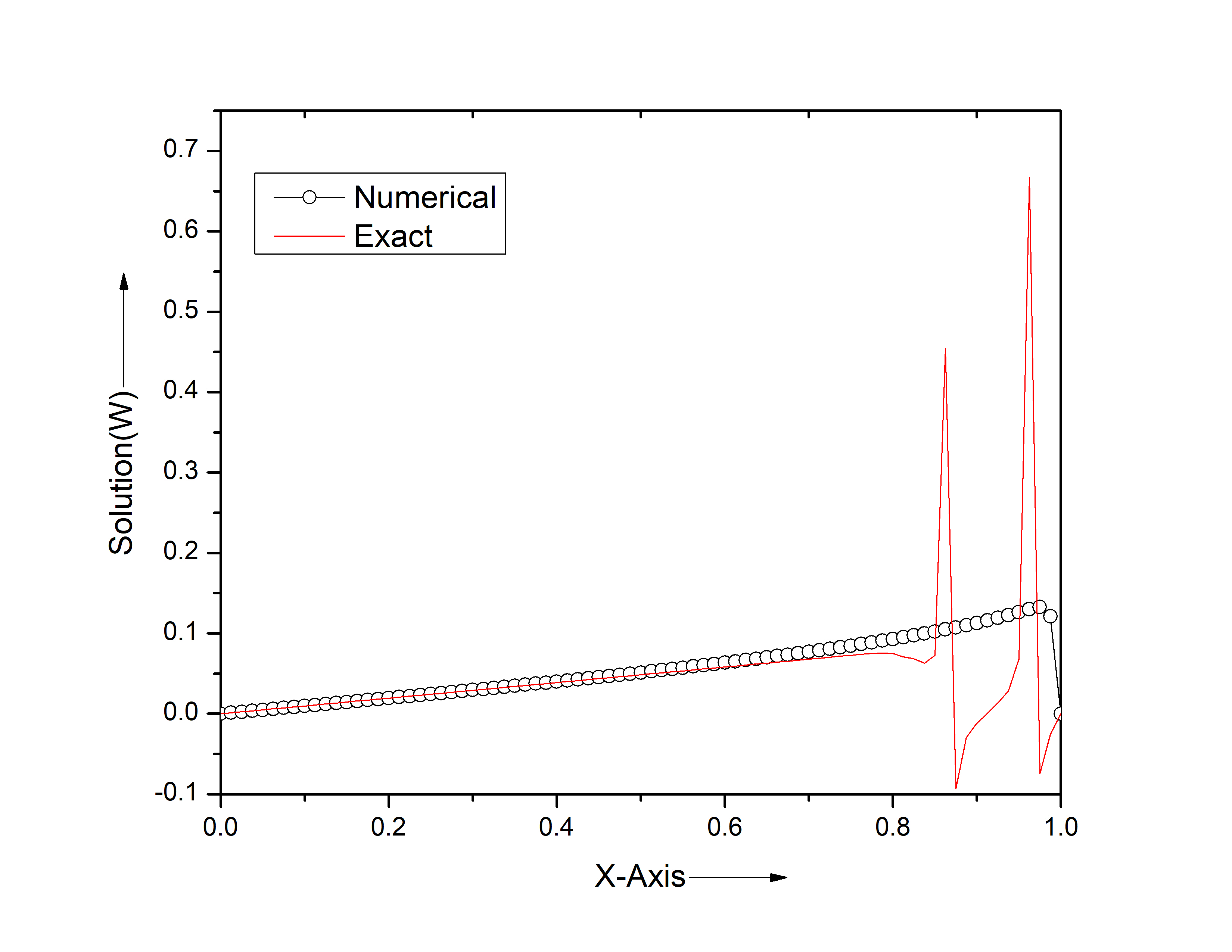}
		\caption{Comparison of Numerical and analytical solution for Problem \ref{p1} at time $T=10$, $\nu_{d}=0.001,\tau=0.001$ and $h=0.0125$.}
		\label{fg4}
	\end{center}
\end{figure}

\begin{figure}[h!]
	\begin{center}
		\includegraphics[width=3in]{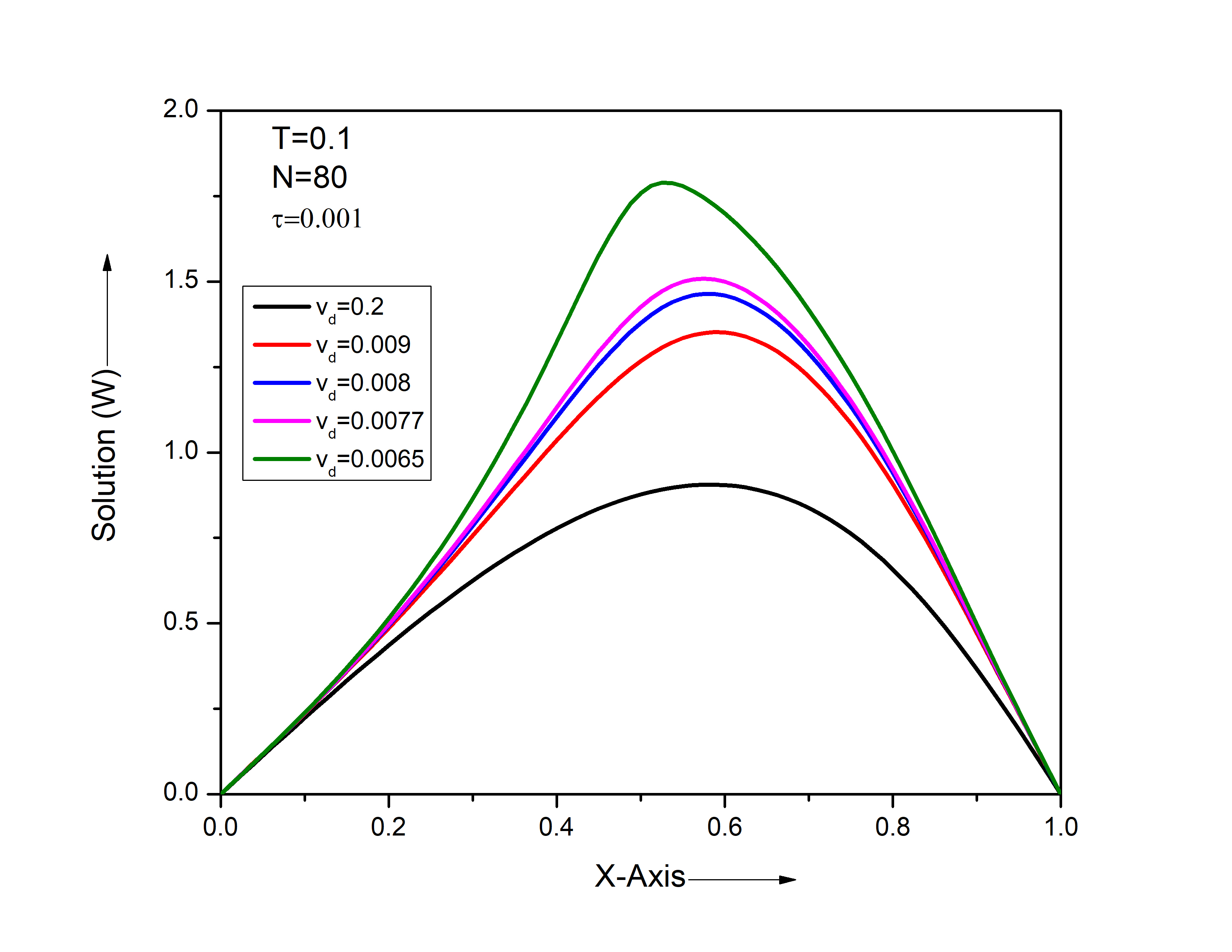}
		\caption{Numerical Solution of problem \ref{p1}  at time $T=0.1$, for different small values of $\nu_{d}$ with $\tau=0.001$ and $h=0.0125$.}	
		\label{fg5}
	\end{center}
\end{figure}

\begin{figure}[h!]
	\begin{center}
		\includegraphics[width=3in]{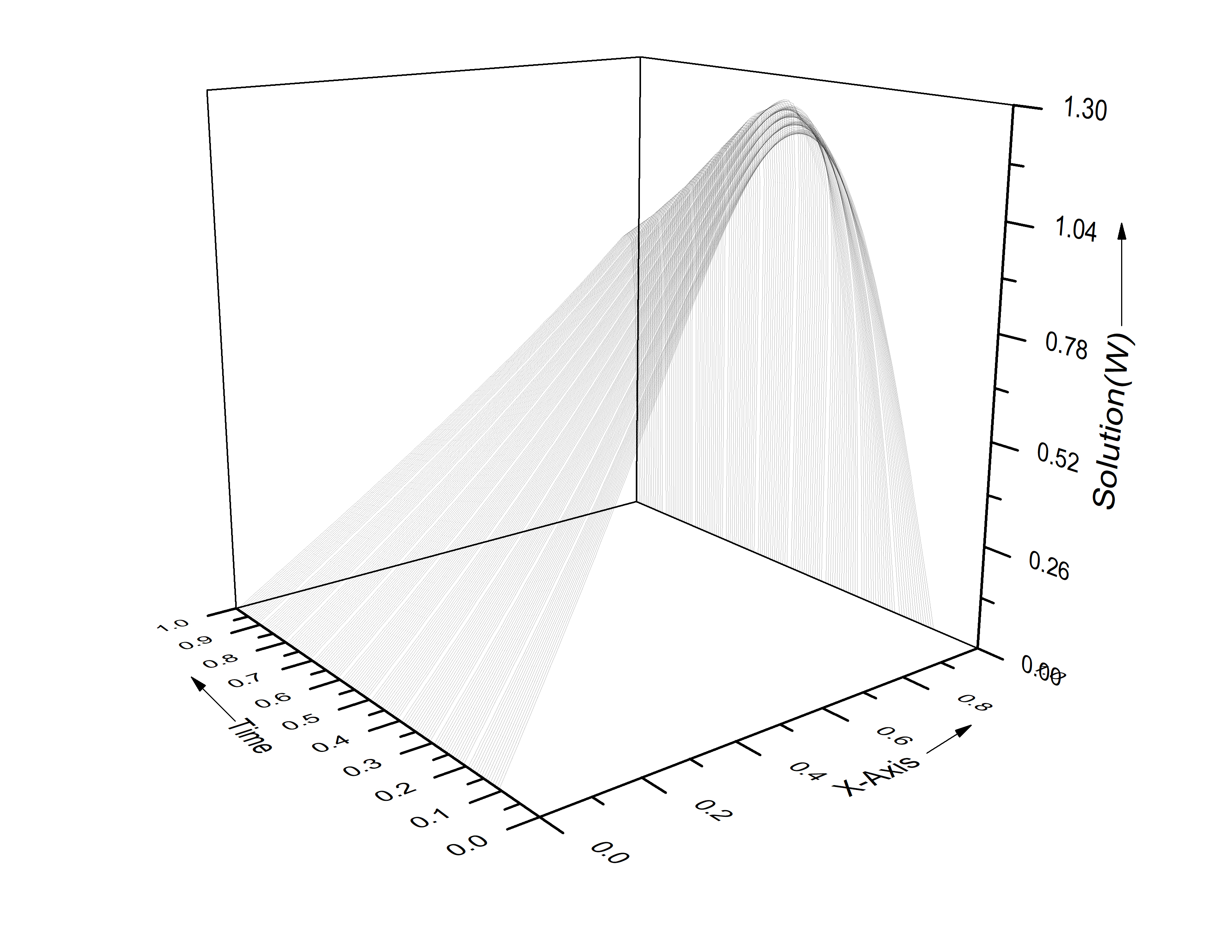}
		\caption{Numerical Solution of problem \ref{p1} at different time $T$, $\nu_{d}=0.01$, $h=0.0125$ and $\tau=0.001$. }
		\label{fg6}
	\end{center}
\end{figure}

\subsection{Example 2}
\label{p2}
Consider the equation \eqref{lb1} with Dirichlet BCs
\begin{equation}
w(\alpha_{i},t)=0,~~ \alpha_{i}=i,~~i=0,1 ~~\mbox{and} ~~t\in(0,T],
\end{equation}
and ICs
\begin{equation}
w(x,0)=4x(1-x), \hspace{.5cm} x\in(0,1),
\end{equation}
where $\nu_{d}$ is the coefficient of viscosity. 
Using the transformation
\begin{equation}
w(x,t)=\frac{-\nu_{d}\psi_{x}}{\psi},
\end{equation}
we see that the equation \eqref{ANBE} represents the analytic solution where
\begin{eqnarray}
	&&\nonumber \beta_{0}=\int_{0}^1 \exp \Big(-\frac{2x^2}{3 \nu_{d}}(3-2 x\Big)dx,\\
	&&\nonumber \beta_{l}=2\int_{0}^1 \exp \Big(-\frac{2 x^2}{3 \nu_{d}}(3-2x)\cos l\pi x\Big)dx.
\end{eqnarray}
In table \ref{tb4}, we depict the numerical result and compare with the exact solution at different sptial point for $\tau=0.0001$, $\nu_{d}=2$ with  $h=0.0125$  and observed that the numerical result are very closed to the exact solution. $L_{\infty}$ and $L_{2}$ -error indicate that the difference between analytical solution and numerical solution are very less. For the comparison purpose, in table \ref{tb5}, we  take $\nu_{d}=0.2,~\tau=0.0001$ and $h=0.0125$ and observed that the present method gives slightly better result compare to the numerical result in \cite{kutluay2004numerical} and \cite{asaithambi2010numerical}. It can also be seen that when time goes on, the solution get decreases at the same location. Also while location changes from $0$ to $1$, first solution increases and then decreases at each moment and therefore it follows the parabolic profile. In table \ref{tb6}, we depict the numerical solution and analytical solution at different time for small value of $\nu_{d}=0.01$ by taking $\tau=0.01$ and $h=0.0125$. It can be seen that the numerical results are very closed to the exact solution.\par
In figure \ref{fg9}, it can be seen that the analytical solution start oscillation between $x=0.8$ to $x=1$ for small value of $\nu_{d}=0.001$ because the slow convergence of the infinite series  for small value of $\nu_{d}$ but solution obtained by this method follows the parabolic profile. Figure \ref{fg7} demonstrate the accuracy of the method for $\nu_{d}=0.1$ and it can be seen that analytical solution and numerical solution are almost same at different time throughout the domain. Figure \ref{fg10} shows that result obtained by the present method  follows the nature of the solution for different small values of $\nu_{d}$. Figure \ref{fg8} illustrate the physical nature of the solution in three dimension. 
\begin{table}[!ht]
	\caption{Solution of problem \ref{p2} for $h=0.0125$ at different value $T$ for $\nu_{d}=2$ and $\tau=0.0001$}
	\label{tb4}	
	\centering
	\begin{center}	
		\resizebox{16cm}{3.5cm}{
			\begin{tabular}{lllllll}
				\hline
				$x$\hspace{1cm} & \multicolumn{2}{l}{T=0.001}        & \multicolumn{2}{l}{T=0.01}         & \multicolumn{2}{l}{T=0.1}          \\ 
				& \multicolumn{2}{l}{\noindent\rule{6.2cm}{0.4pt}}        & \multicolumn{2}{l}{\noindent\rule{6.2cm}{0.4pt}}         & \multicolumn{2}{l}{\noindent\rule{6.2cm}{0.4pt}}\\
				& Computed Solution & Exact Solution & Computed Solution & Exact Solution & Computed Solution & Exact Solution \\ \hline
				0.1 & 0.350702990         & 0.350947       & 0.294821969        & 0.294953       & 0.112863           & 0.112892       \\
				0.2 & 0.630240123        & 0.630504       & 0.552873368        & 0.553085       & 0.216195           & 0.216252       \\
				0.3 & 0.830425346        & 0.830681       & 0.749515568        & 0.749751       & 0.300887           & 0.300966       \\
				0.4 & 0.951009637        & 0.951242       & 0.873232122        & 0.873459       & 0.358770            & 0.358863       \\
				0.5 & 0.991793845        & 0.991996       & 0.919517990         & 0.919723       & 0.383324           & 0.383422       \\
				0.6 & 0.952578533        & 0.952752       & 0.886057211        & 0.886239       & 0.370563           & 0.370658       \\
				0.7 & 0.833164134        & 0.833318       & 0.771302597        & 0.771464       & 0.319985           & 0.320066       \\
				0.8 & 0.633350801        & 0.633500         & 0.576137870         & 0.576273       & 0.235312           & 0.235371       \\
				0.9 & 0.352988009        & 0.353149       & 0.310053369        & 0.310136       & 0.124687           & 0.124718  \\ \hline   
				$L_{\infty}~~error$ &2.64275E-04 & &2.35909E-04 & & 9.85169E-05\\
				$L_{2}~~error$ &6.55334E-05&   &6.07706E-05&   &2.46429E-05 \\ \hline 
		\end{tabular}}
	\end{center}
\end{table}

\begin{table}[!ht]
	\caption{Comparison of existing, present numerical result and exact solution for $h=0.0125,~\nu_{d}=0.2,~\tau=0.0001$ at different  value of $T$ for problem \ref{p2}.} 
	\label{tb5}
	\centering
	\begin{tabular}{llllll}
		\hline
		$x$ \hspace{0.5in} & $T$ \hspace{0.5in} & FEM\cite{kutluay2004numerical}    \hspace{0.5in} &  Asai \cite{asaithambi2010numerical}    \hspace{0.5in} & Present  \hspace{0.5in} & Exact   \\ \hline
		0.25 & 0.4 & 0.32091 & 0.31754 & 0.317374 & 0.31752 \\
		& 0.6 & 0.24910 & 0.24616 & 0.246045 & 0.24614 \\
		& 0.8 & 0.20211 & 0.19958 & 0.199490  & 0.19956 \\
		& 1   & 0.16782 & 0.16562 & 0.165549 & 0.16560  \\
		& 3   & 0.02828 & 0.02777 & 0.027752 & 0.02776 \\
		0.50 & 0.4 & 0.58788 & 0.58460  & 0.584404 & 0.58458 \\
		& 0.6 & 0.46174 & 0.45805 & 0.457862 & 0.45798 \\
		& 0.8 & 0.37111 & 0.36748 & 0.367304 & 0.36740  \\
		& 1   & 0.30183 & 0.29843 & 0.298267 & 0.29834 \\
		& 3   & 0.04185 & 0.41090  & 0.041054 & 0.04107 \\
		0.75 & 0.4 & 0.65054 & 0.64586 & 0.645660  & 0.64562 \\
		& 0.6 & 0.50825 & 0.50294 & 0.502629 & 0.50268 \\
		& 0.8 & 0.39068 & 0.38557 & 0.385269 & 0.38534 \\
		& 1   & 0.30057 & 0.29605 & 0.295794 & 0.29586 \\
		& 3   & 0.03106 & 0.03046 & 0.030432 & 0.03044 \\ \hline
	\end{tabular}
\end{table}
\begin{table}[!ht]
	\caption{Solution of problem \ref{p2}  at different time $T$ for $\nu_{d}=0.01,~h=0.0125$ and $\tau=0.01$}
	\label{tb6}	
	\begin{center}
		\begin{center}
			\resizebox{6cm}{3cm}{
				\begin{tabular}{llll}
					\hline
					$x$  & $T$ & Numerical Solution & Exact Solution \\ \hline
					0.25 & 5   & 0.047372           & 0.047415       \\
					& 10  & 0.024321           & 0.024336       \\
					& 15  & 0.016355           & 0.016362       \\
					& 20  & 0.012268           & 0.012272       \\
					0.50 & 5   & 0.094895           & 0.094814       \\
					& 10  & 0.048653           & 0.048660        \\
					& 15  & 0.032542           & 0.032550        \\
					& 20  & 0.023951           & 0.023957       \\
					0.75 & 5   & 0.142693           & 0.142154       \\
					& 10  & 0.071560            & 0.071517       \\
					& 15  & 0.044330            & 0.044328       \\
					& 20  & 0.029271           & 0.029275    \\ \hline  
			\end{tabular}}
		\end{center}
	\end{center}
\end{table}

\begin{figure}[h!]
	\begin{center}
		\includegraphics[width=4in]{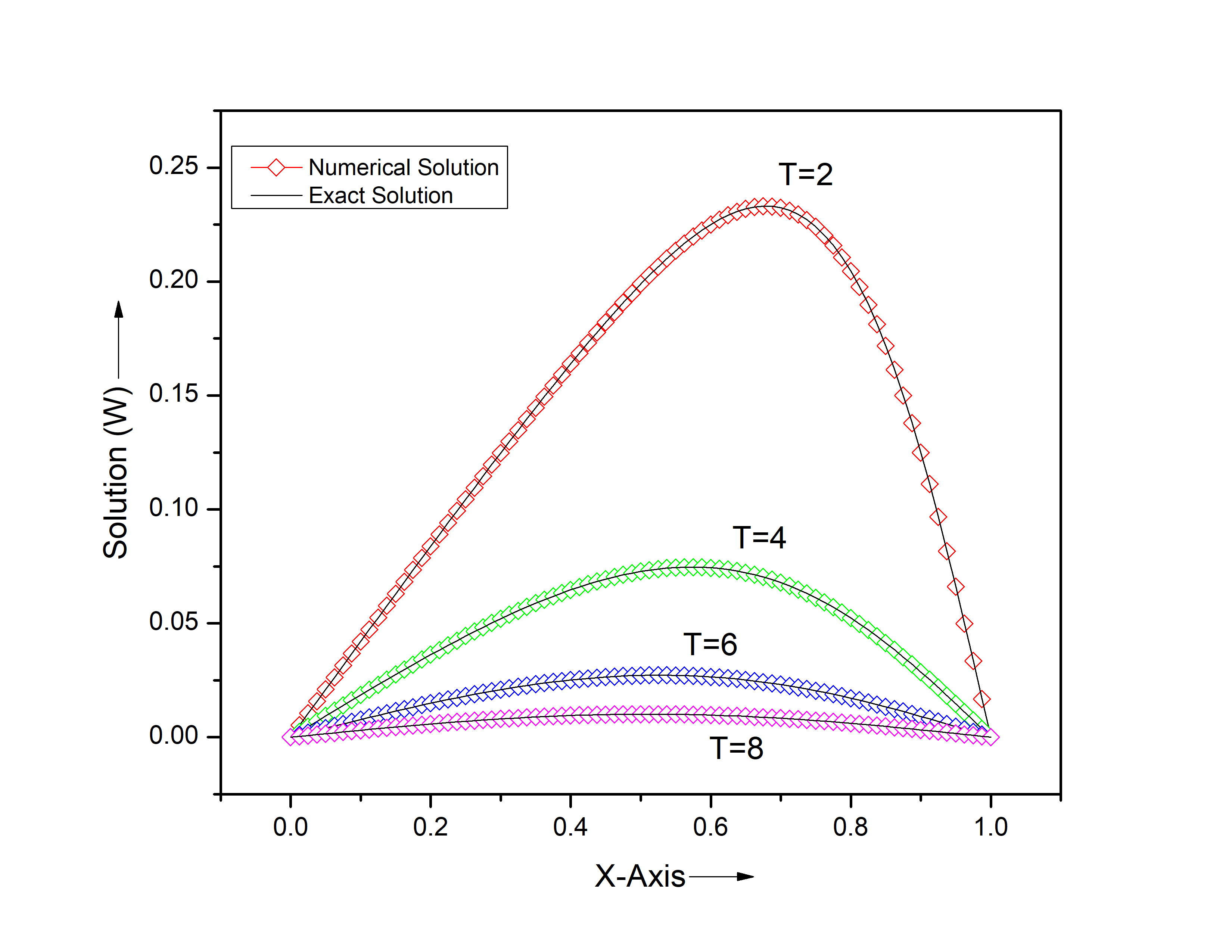}
		\caption{Comparison of Numerical and analytical solution for problem \ref{p2} at different time $T$ , $\nu_{d}=0.1,h=0.0125$ and $\tau=0.001$.}
		\label{fg7}
	\end{center}
\end{figure}

\begin{figure}[h!]
	\begin{center}
		\includegraphics[width=3in]{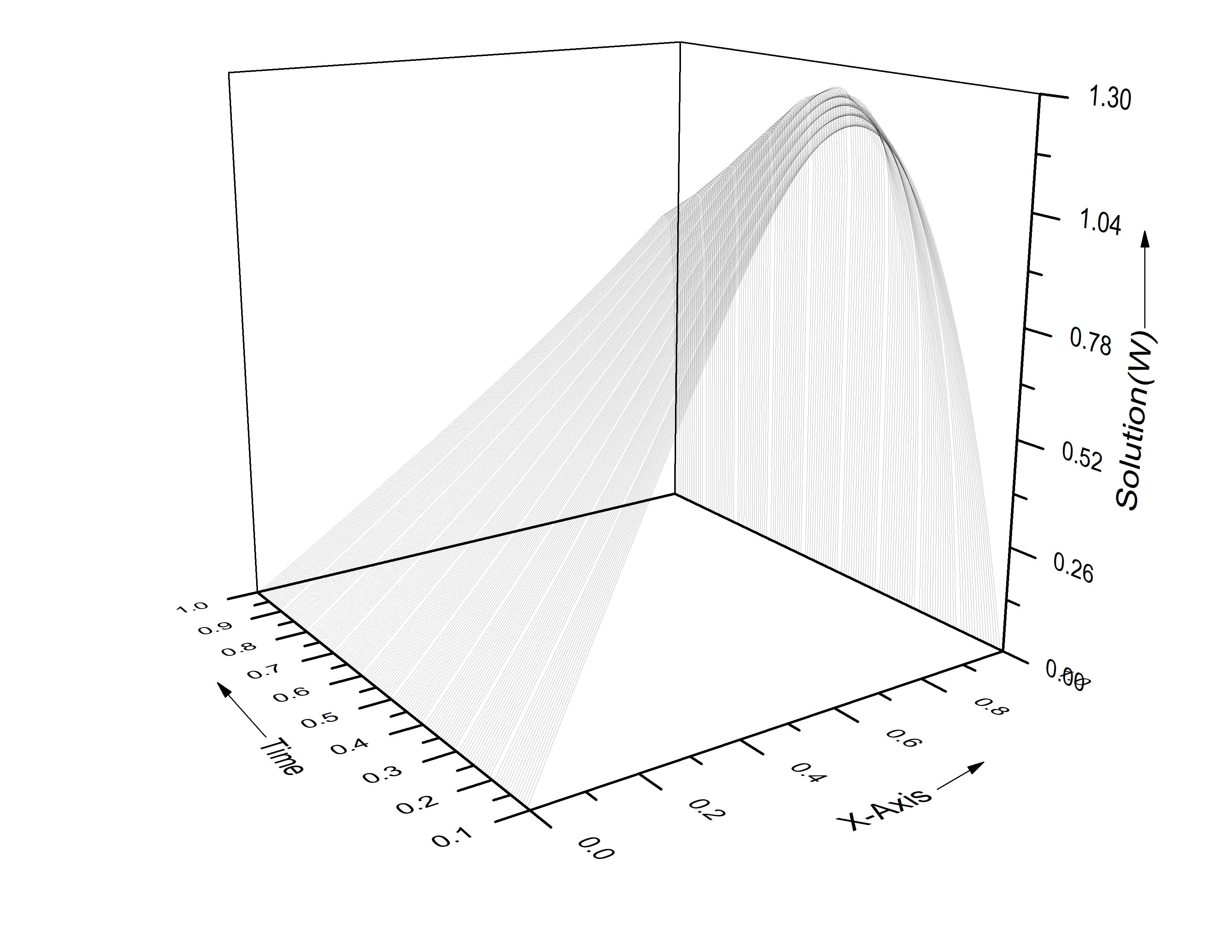}
		\caption{ Numerical solution for problem \ref{p2} for $\nu_{d}=0.01$, $\tau=0.001,~h=0.0125$ and at different times.}
		\label{fg8}
	\end{center}
\end{figure}

\begin{figure}[h!]
	\begin{center}
		\includegraphics[width=3in]{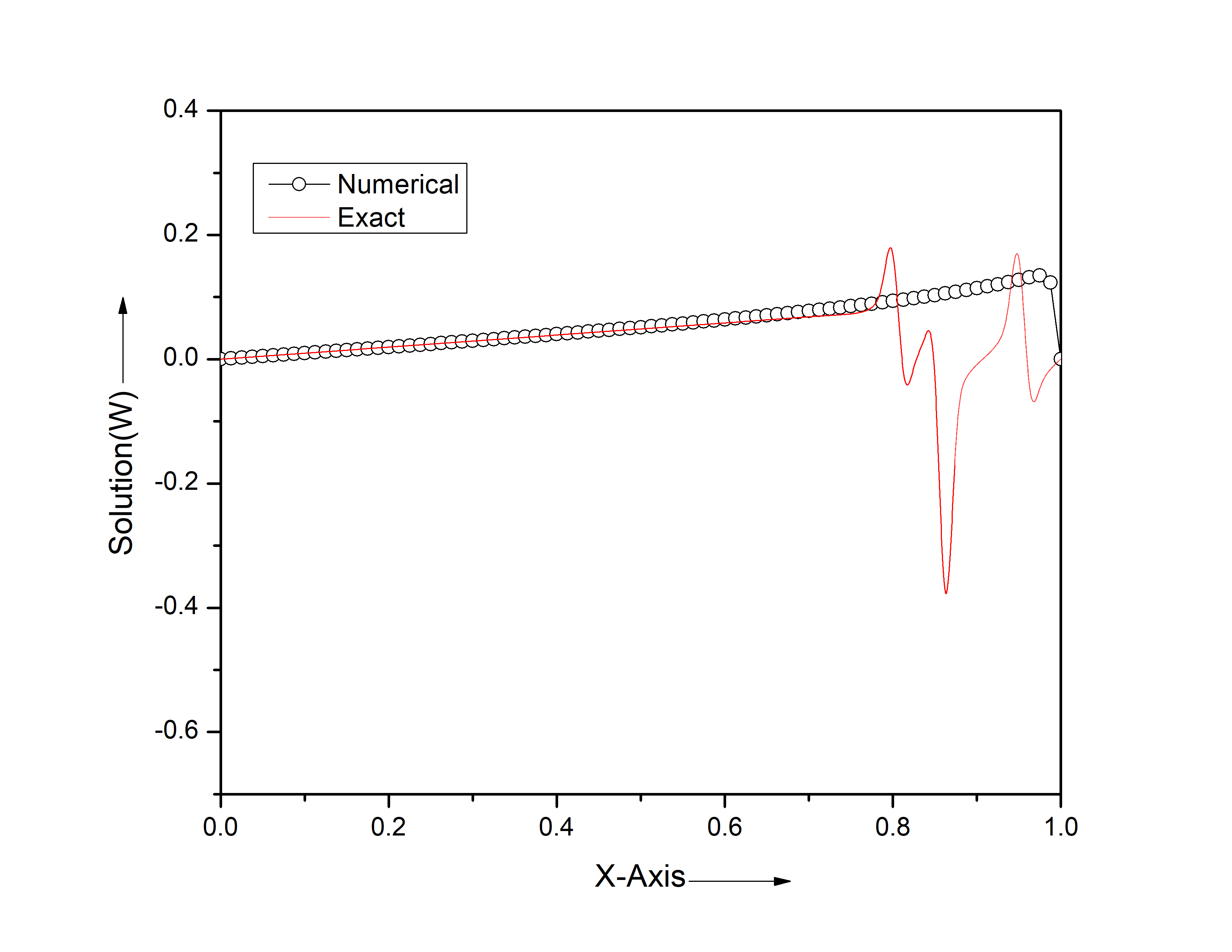}
		\caption{Comparison of Numerical and analytical solution for problem \ref{p2} at time $T=10$, $\nu_{d}=0.001,~\tau=0.001$ and $h=0.0125$.}
		\label{fg9}
	\end{center}
\end{figure}

\begin{figure}[h!]
	\begin{center}
		\includegraphics[width=3in]{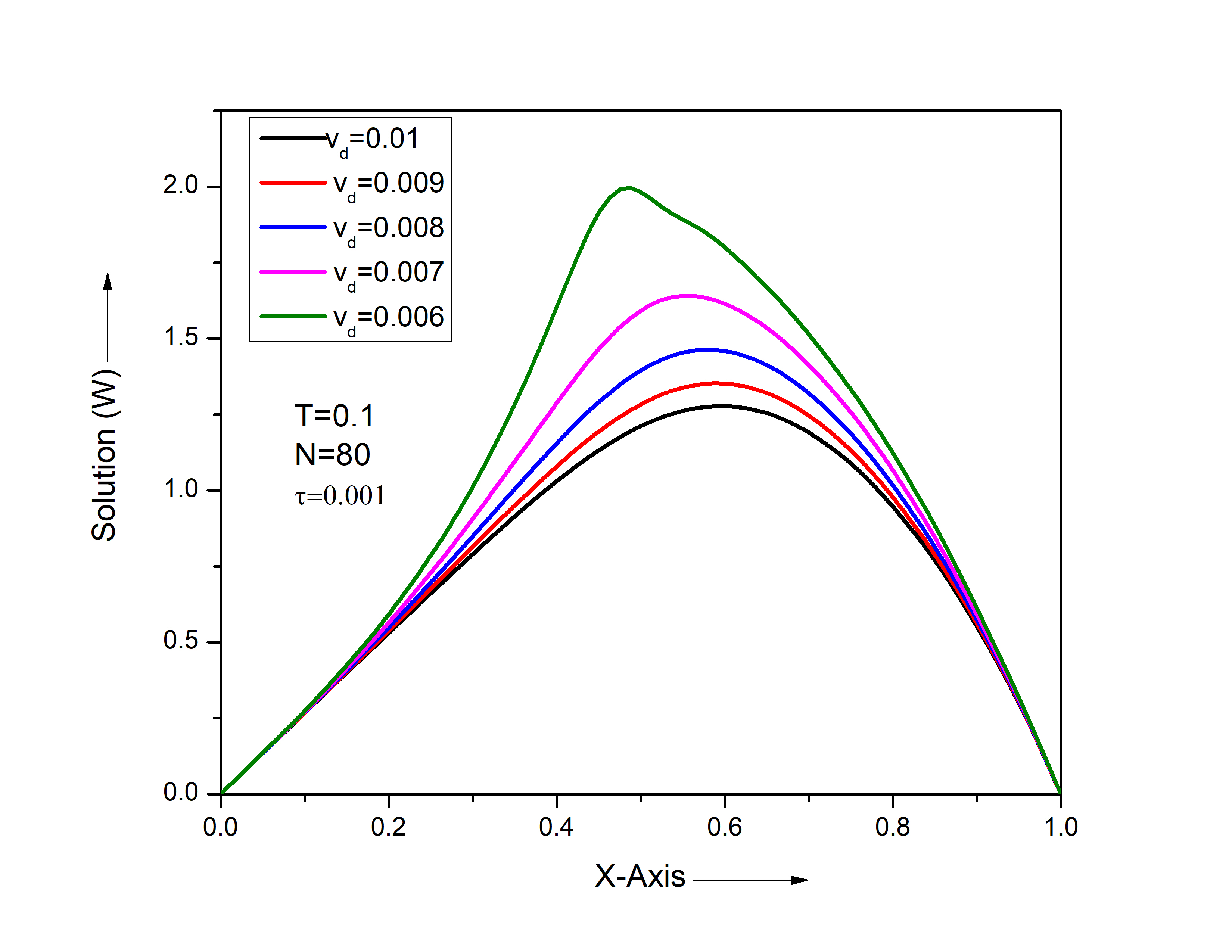}
		\caption{ Numerical solution for problem \ref{p2}  at time $T=0.1$, for different small value of $\nu_{d}$ with $\tau=0.001$ and $h=0.0125$.}
		\label{fg10}
	\end{center}
\end{figure}

\subsection{Example 3}
\label{p3}
Consider the shock-like solution of Burger's equation \cite{xie2008numerical}. The exact solution is given by 
\begin{equation}
\label{sw}
w(x,t)=\frac{\frac{x}{t}}{1+\sqrt{\frac{t}{t_{0}}e^{\frac{x^2}{2\nu_{d}t}}}},~~ x\in(0,1.2),~~t\geq 1,
\end{equation}
where $t_{0}=e^{\frac{1}{4\nu_{d}}}$ having boundary condition
\begin{equation}
w(0,t)=0=w(1.2,t),~~ t>1
\end{equation}
and initial condition
\begin{equation}
w(x,1)=\frac{x}{1+e^{\frac{1}{2\nu_{d}}(x^2-\frac{1}{4})}},~~x\in(0,1.2)
\end{equation}
which is obtained from equation \eqref{sw} by putting $t=1$.\par
For the comparison purpose in table \ref{tb7}, we take time step $\tau=0.01$, spatial step $h=0.0005$ and small value of viscous coefficient $\nu_{d}=0.002$. The numerical solution at different discrete point are compared with the exact solution and numerical solutions are also compared with the existing result in \cite{xie2008numerical}. It is found that the present method gives better result  compared to the method in  \cite{xie2008numerical}. For this example, discrete $L_{2}$ and $L_{\infty}$-error norm are also given and compared with the error for this example given in  \cite{xie2008numerical}. It can be seen that error produced by present method is far less than that of error by the method in  \cite{xie2008numerical}.  Figure \ref{fgp3} shows the correct physical behaviour of the present method for small value of $\nu_{d}=0.001.$

\begin{table}[!ht]
	\centering
	\caption{ Comparison of present result with existing result and it's error at different $T$  for $\nu_{d}=0.002,~h=0.0005$ and $\tau=0.01$  for problem \ref{p3}}
	\label{tb7}
	\scriptsize{
		\begin{tabular}{llllllllll}
			\hline
			$x$ & \multicolumn{3}{l}{T=1.7} \hspace{2cm}       & \multicolumn{3}{l}{T=3.0}     \hspace{1cm}    & \multicolumn{3}{l}{T=3.5}          \\ 
			& \multicolumn{3}{l}{\noindent\rule{4cm}{0.4pt}}        & \multicolumn{3}{l}{\noindent\rule{4cm}{0.4pt}}         & \multicolumn{3}{l}{\noindent\rule{4cm}{0.4pt}}\\
			& Exact                   & Present  & \cite{xie2008numerical}         & Exact    & Present  & \cite{xie2008numerical}         & Exact    & Present  & \cite{xie2008numerical}        \\ \hline
			0.2                                 & 0.117647                & 0.117660 & 0.11745      & 0.066667 & 0.066669 & 0.06648      & 0.057143 & 0.057144 & 0.05697      \\
			0.4                                 & 0.235294                & 0.235420 & 0.23456      & 0.133333 & 0.133355 & 0.13295      & 0.114286 & 0.114299 & 0.11394      \\
			0.6                                 & 0.352909                & 0.353346 & 0.34936      & 0.200000 & 0.200079 & 0.19922      & 0.171429 & 0.171478 & 0.17082      \\
			0.8                                 & 0.000000                & 0.000000 & 0.00000      & 0.266618 & 0.266808 & 0.26478      & 0.228571 & 0.228690  & 0.22737      \\
			1.0                                 & 0.000000                & 0.000000 & 0.00000      & 0.000000 & 0.000000 & 0.00000      & 0.000020 & 2.03E-05 & 0.000028     \\ \hline
			\multicolumn{2}{l}{$10^3\times L_{\infty}error$} & 0.50201 & 29.70447 &          & 0.21289 & 19.00976 &          & 0.16870 & 16.78871 \\
			\multicolumn{2}{l}{$10^3\times L_{2}error$}                     & 0.16675 & 3.59366  &          & 0.08135 & 2.63510  &          & 0.06695 & 2.41729 \\ \hline
	\end{tabular}}
\end{table}

\begin{figure}[h!]
	\begin{center}
		\includegraphics[width=3in]{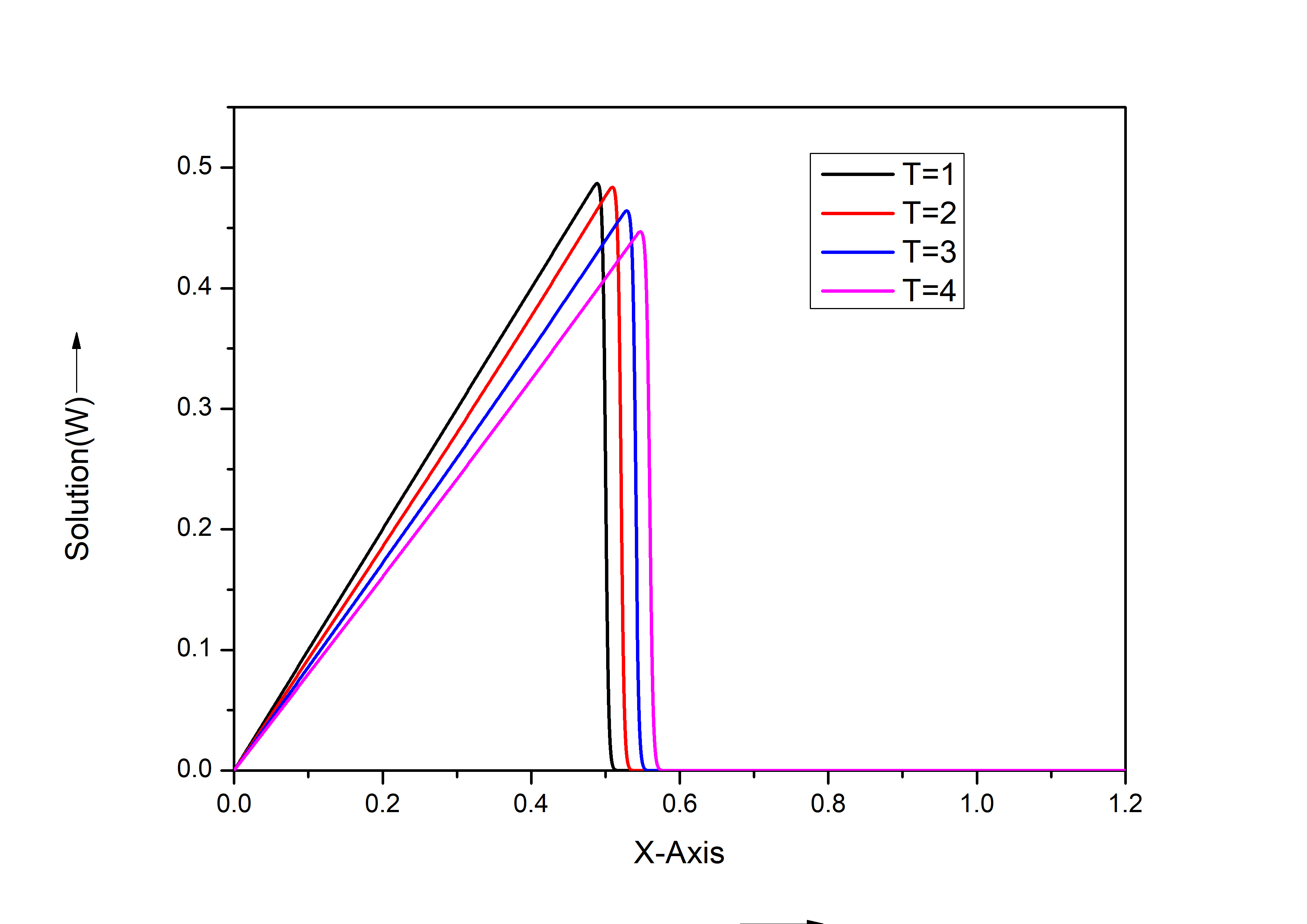}
		\caption{ Numerical solution for problem \ref{p3} for $\nu_{d}=0.001,h=0.001,\tau=0.01$ at different time $T$.}
		\label{fgp3}
	\end{center}
\end{figure}

\subsection{Example 4}
 \label{p4}
 In this example, we consider  the analytic solution of Burger's equation \cite{cecchi1996space} given by
 \begin{eqnarray}
 \label{be}
 w(x,t)=\pi \nu_{d}\frac{\sin(\pi x)exp(-\pi^2\nu_{d}^2t/4)+4\sin(2\pi x)exp(-\pi^2\nu_{d}^2t)}{4+\cos(\pi x)exp(-\pi^2\nu_{d}^2t/4)+2\cos(2\pi x)exp(-\pi^2\nu_{d}^2t)},
 \end{eqnarray}
 with the boundary condition $w(0,t)=0=w(2,t)$ and initial condition is obtained by putting $t=0$ in \eqref{be}.
 The physical behaviour of the numerical solution for $\nu_{d}=0.001,~\tau=0.01$ and $h=0.025$ is exhibited in the figure \ref{fgp4} (left). Absolute error are depicted in figure  \ref{fgp4} (right) and it can be seen that the absolute error are $\leq 10^{-3}$ for different time moment. Hence  the numerical result obtained by present method is acceptable.
 \begin{figure}[!ht]
 \includegraphics[height=2.5in,width=3.5in]{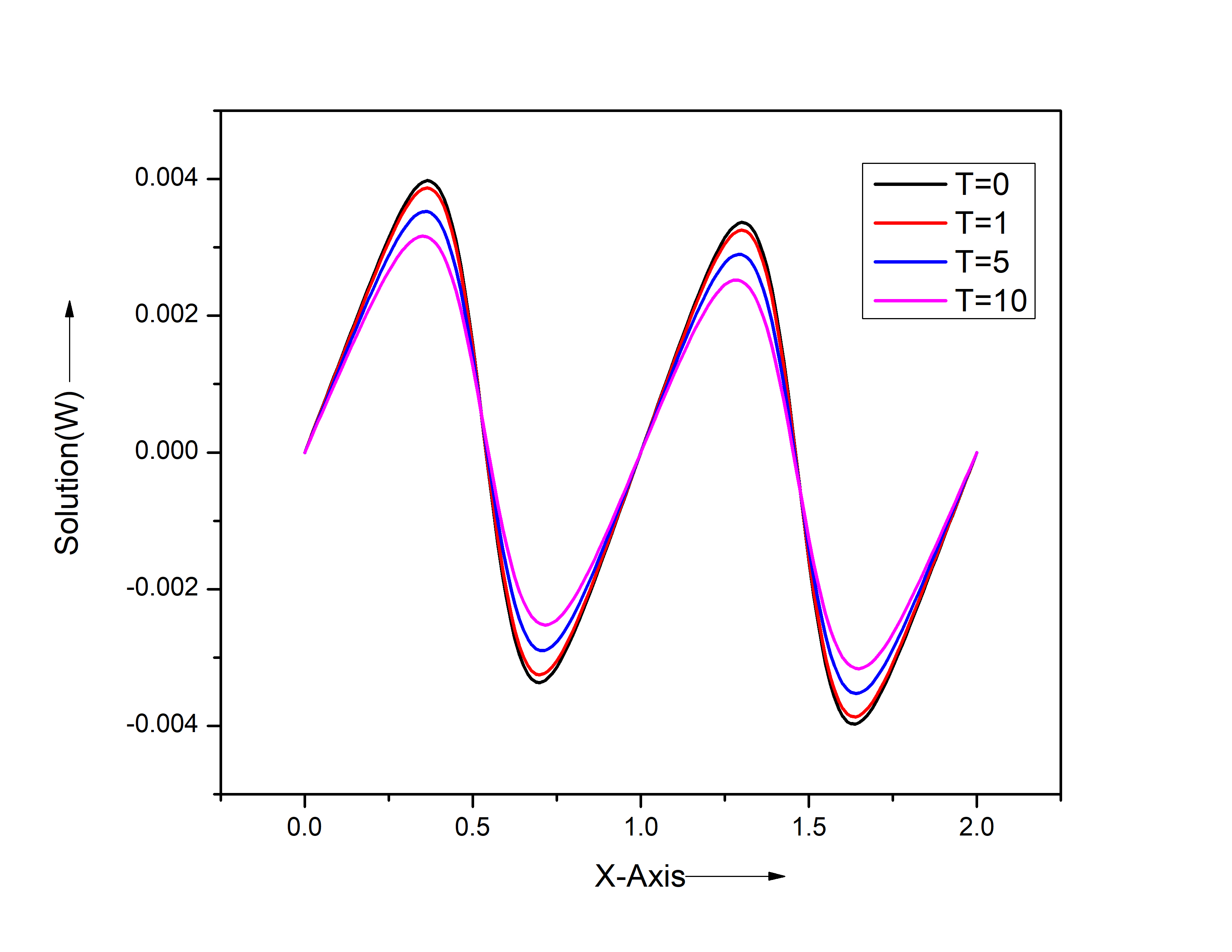}
 		\includegraphics[height=2.5in,width=3.5in]{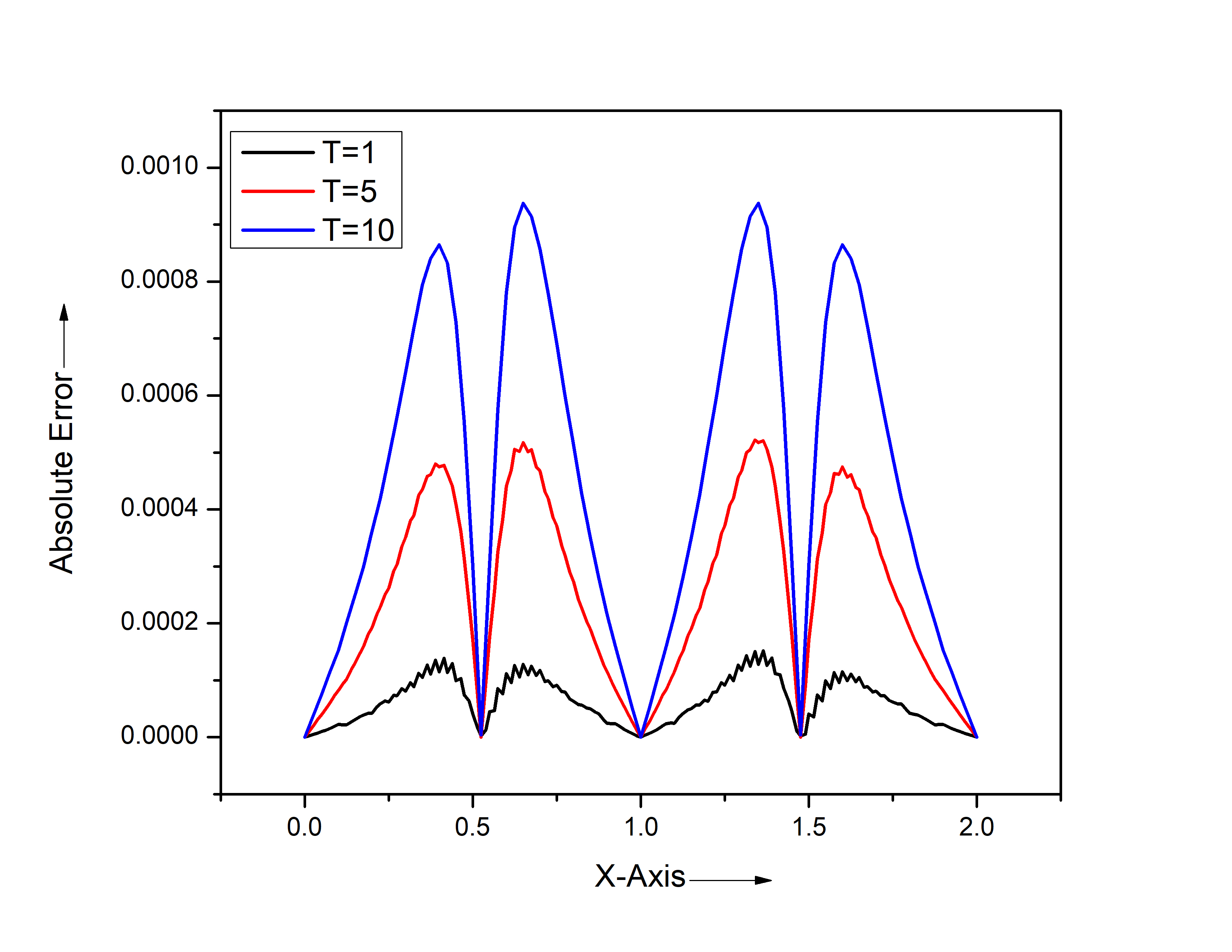}
 		\caption{Numerical approximation (left) and absolute error (right) of problem \ref{p4} for $\nu_{d}=0.001,h=0.025,\tau=0.01$ at different time $T$.}
 		\label{fgp4}
 	\end{figure}
\subsection{Example 5}
\label{l15}
Here, the BCs are same as \eqref{bc} and ICs as
\begin{eqnarray}
	w(x,0)=\sin \frac{\pi}{2}x,\hspace{.5cm} x\in(0,1).
\end{eqnarray}
Equation \eqref{ANBE} represents the analytical solution of the above problem, where 
\begin{eqnarray}
	&&\beta_{0}=\int_{0}^1 \exp\Big(\frac{2}{\pi \nu_{d}}(\cos\frac{\pi}{2} x-1)\Big)dx,\\
	&&\beta_{l}=2\int_{0}^1 \exp \Big(\frac{2}{\pi \nu_{d}}(\cos\frac{\pi}{2} x-1)\cos l\pi x\Big)dx,
\end{eqnarray}
are Fourier coefficients.\par
This example shows inconsistent ICs and BCs at the boundary point $1$. Figure \ref{fg11} shows high oscillation near the boundary point $1$ by the CN method  while the present method gives accurate and stable numerical solutions throughout the domain.
\begin{figure}[h!]
	\begin{center}
		\includegraphics[width=3in]{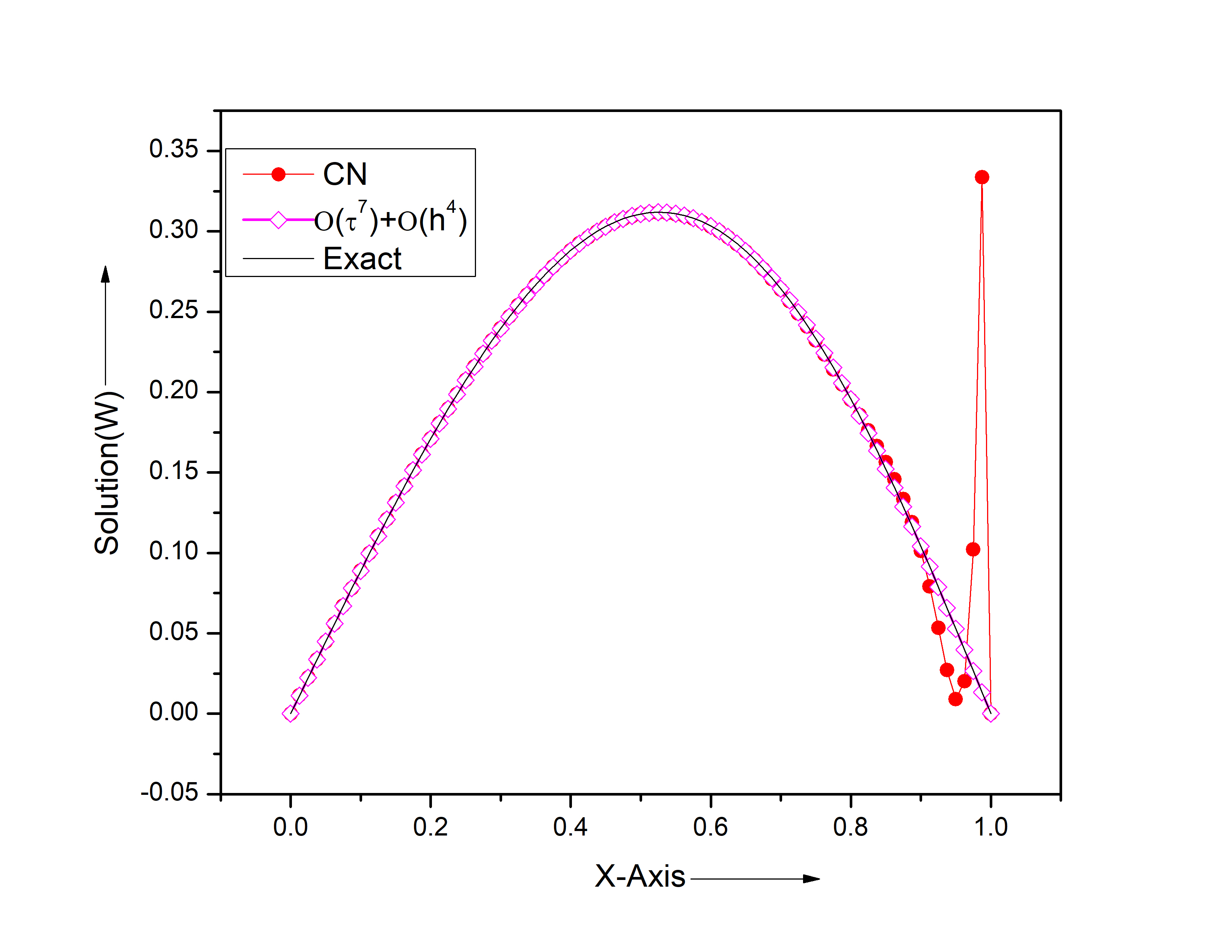}
		\caption{Comparison of  analytical solution with numerical solution by CN method and our method of the problem \ref{l15} at $T=0.1$ at different spacial point for $h=0.0125, \nu_{d}=2,$ and $\tau=0.01$.}
		\label{fg11}
	\end{center}
\end{figure}

\subsection{Example 6}
\label{l16}
Here, we take the BCs same as \eqref{bc} and ICs as
\begin{eqnarray}
	w(x,0)=\cos \frac{\pi}{4}x,\hspace{.5cm} x\in(0,1).
\end{eqnarray}
Equation \eqref{ANBE} represents the analytical solution of the above problem, where
\begin{eqnarray}
	&&\beta_{0}=\int_{0}^1 \exp\Big(-\frac{4}{\pi \nu_{d}}(sin\frac{\pi}{4} x)\Big)dx,\\
	&&\beta_{l}=2\int_{0}^1 \exp \Big(-\frac{4}{\pi \nu_{d}}(-\sin\frac{\pi}{4} x)\cos l\pi x\Big)dx,
\end{eqnarray}
are Fourier coefficient.\par
This example shows inconsistent ICs  and BCs at both the boundary point $0$ and $1$.  In figure \ref{fg12}, it can be seen that the numerical solution obtained by CN method has high oscillation near both the boundary point while the present method gives accurate and stable numerical solutions throughout the domain.
\begin{figure}[h!]
	\begin{center}
		\includegraphics[width=3in]{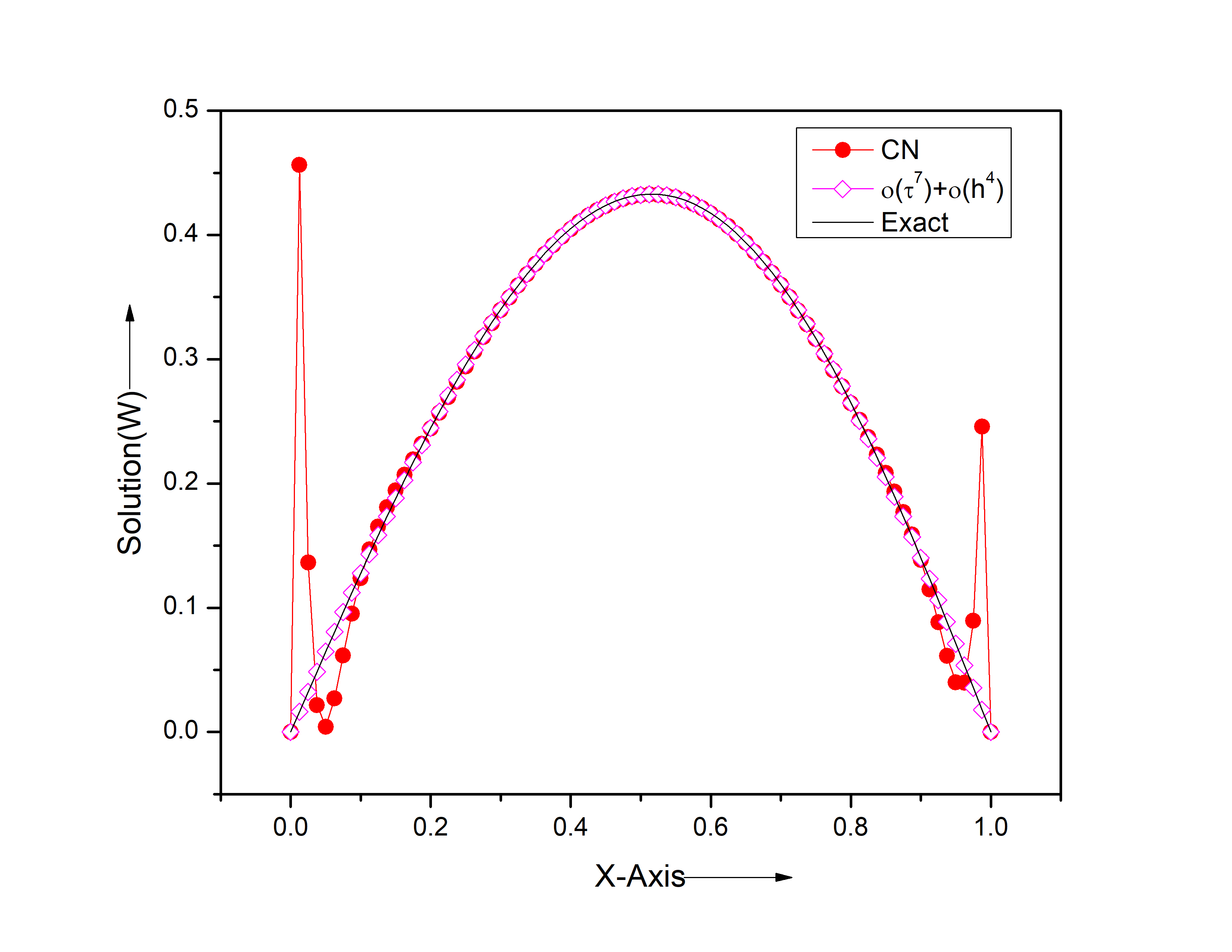}
		\caption{Comparison of analytical solution with numerical solution by CN method and our method of the problem \ref{l16}  at $T=0.1$  and at different spacial point for $h=0.0125, \nu_{d}=2,$ and $\tau=0.01$.}
		\label{fg12}
	\end{center}
\end{figure}

\section{Conclusion}
In the present paper, we have used $5^{th}$ order Hermite interpolation polynomial and $6^{th}$ order explict backward Taylor's series approximation formula to derive $7^{th}$ order time integration formula which is weakly L-stable. Also to linearize Burger's equation, we have used Hopf-Cole transformation and then $4^{th}$ order finite difference ratio for second order spatial derivative is used. Present method is tested over some problem and the approximated result obtained are satisfactory and comparabably good with the existing result found in literature. It is also observed that the numerical solutions are in good agreement with the  exact solutions for small values of viscosity. The strength of this method is that it is easy to implement and took very less time for computation. The ideas of the papers can further be extended to generalize the results further and study the behavior of the solution of the Burgers' equation for small values of viscosity.

\section{Acknowledgment} We are thankful to Dr Lajja, NSIT Bihta, IIT Patna, for her suggestions. 

\end{document}